\documentclass[10pt]{article}
\usepackage{amsmath, amssymb, amsthm, amsfonts, mathtools}
\usepackage{mathrsfs}
\usepackage{url}
\usepackage{authblk}
\usepackage[usenames]{color}
\usepackage{geometry}
\usepackage{indentfirst}
\usepackage{enumitem}
\usepackage{hyperref}
\hypersetup{
	colorlinks=false,
	pdfborder={0 0 0}
}
\allowdisplaybreaks

\theoremstyle{plain}
\newtheorem{thm}{Theorem}[section]

\newtheorem{lem}{Lemma}[section]
\newtheorem{prop}{Proposition}[section]

\theoremstyle{definition}
\newtheorem{defn}{Definition}[section]
\newtheorem{ass}{Assumption}[section]
\newtheorem{rmk}{Remark}[section]
\newtheorem{Remark}{Remark}

\makeatletter\@addtoreset{equation}{section}\makeatother

\begin{document}

\title{Indefinite Stochastic LQ Optimal Control for Jump-Diffusion Systems with Random Coefficients
	\thanks{This work was supported by the National Natural Science Foundation of China (No.12271158), the Natural Science Foundation of Zhejiang Province for Distinguished Young Scholar (No.LZ22A010005), the Natural Science Foundation of Huzhou City (No.2023YZ46) and the Postgraduate Research and Innovation Project of Huzhou Normal University (No.2024KYCX66).}}

\date{}
\author[a]{Xinyu Ma}
\author[a]{Qingxin Meng\footnote{Corresponding author.
		\authorcr
		\indent E-mail address: mxyzfd@163.com (X. Ma), mqx@zjhu.edu.cn (Q. Meng)}}
\affil[a]{\small{Department of Mathematical Sciences, Huzhou Normal University, Zhejiang 313000, P.R. China}}

\maketitle

                    \begin{abstract}
This paper studies indefinite stochastic linear-quadratic (LQ) optimal control for jump-diffusion systems with random coefficients. We construct an algebraic inverse flow from the zero-control base system, extract the semimartingale kernel of the value function, and prove that it satisfies a generalized stochastic Riccati equation with jumps (SREJ). Under a uniform convexity condition, we establish the existence and uniqueness of open-loop optimal controls for any initial pair and show that the associated matrix $\mathscr{N}(t)$ is uniformly positive definite, yielding an exact closed-loop feedback representation of the optimal control via the SREJ. A distinguishing feature of our approach is that it requires neither relaxation techniques (as in the compensator method) nor additional invertibility assumptions on the optimal state process, and it accommodates the general case where the control enters the jump part ($F \neq 0$). As an application, we analyze a financial portfolio problem with a jump-diffusion risky asset whose excess return is zero, where the investor minimizes a cost functional with a negative terminal wealth weight. The uniform convexity condition reduces to an explicit inequality among the risk aversion coefficient, volatility, jump magnitude, and risk-free rate, thereby delineating the parametric region in which an optimal strategy exists. These results extend classical indefinite LQ theory to jump-diffusion systems with random coefficients.

\noindent\textbf{Keywords:} Stochastic LQ optimal control; Indefinite control weights; Poisson jumps; Random coefficients; Stochastic Riccati equation; Uniform convexity; Open-loop solvability; Inverse flow.
\end{abstract}

\section{Introduction}

Let \((\Omega, \mathcal{F}, \mathbb{F}, \mathbb{P})\) be a complete filtered probability space over the finite time horizon \([0, T]\), supporting a standard one-dimensional Brownian motion \(W(\cdot)=\{W(t)\}_{0\le t\le T}\) and an independent stationary Poisson point process \(p(\cdot)=\{p(t)\}_{t\ge 0}\) with finite characteristic measure \(\nu\) (so that \(\nu(\mathbb{Z})<\infty\)). The filtration \(\mathbb{F}=\{\mathcal{F}_{t}\}_{0\le t\le T}\) is the \(\mathbb{P}\)-completion of the right-continuous filtration jointly generated by \(W(\cdot)\) and \(p(\cdot)\). Let \(\mathscr{P}\) denote the predictable \(\sigma\)-algebra on \([0, T] \times \Omega\) associated with \(\mathbb{F}\), and let \((\mathbb{Z}, \mathscr{B}(\mathbb{Z}), \nu)\) be a measurable space where \(\mathbb{Z}\) is a topological space with its Borel \(\sigma\)-algebra. The Poisson counting measure \(\mu\) induced by \(p\) is
\[
\mu((0, t] \times E) \triangleq \#\{r \in (0, t] : p(r) \in E\}, \qquad \forall\, t > 0,\; E \in \mathscr{B}(\mathbb{Z}),
\]
and its compensated version is \(\tilde{\mu}(dt, de) \equiv \mu(dt, de) - \nu(de)dt\), which is independent of \(W\) under \(\mathbb{P}\).

We consider controlled linear stochastic differential equations with Poisson jumps (SDEP) on \([t, T]\):
\begin{equation}\label{eq:1.1}
	\left\{\begin{array}{lll}
		dX(s) = \big[A(s) X(s)+B(s)u(s)\big]ds+\big[C(s)X(s)+D(s)u(s)\big]dW(s)\\
		\quad +\displaystyle\int_{\mathbb{Z}}\big[E(s,e)X(s-)+F(s,e)u(s)\big] \tilde{\mu}(ds, de),\qquad s\in\left[ t,T\right] ,\\
		X(t)=\xi,
	\end{array}\right.
\end{equation}
where the matrix-valued processes \(A, C: [0, T] \times \Omega \rightarrow \mathbb{R}^{n \times n}\), \(B, D: [0, T] \times \Omega \rightarrow \mathbb{R}^{n \times m}\), \(E: [0, T] \times \mathbb{Z} \times \Omega \rightarrow \mathbb{R}^{n \times n}\), and \(F: [0, T] \times \mathbb{Z} \times \Omega \rightarrow \mathbb{R}^{n \times m}\) are \(\mathbb{F}\)-predictable. The initial data belong to \(\mathcal{D}:=\{(t,\xi)\mid t\in[0,T],\ \xi\in L_{\mathcal{F}_t}^2(\Omega;\mathbb{R}^n)\}\), and admissible controls are taken from the Hilbert space
\[
\mathcal{U}[t,T]:=\Big\{u:[t,T]\times\Omega\to\mathbb{R}^m\mid u\text{ is }\mathbb{F}\text{-predictable},\ \mathbb{E}\int_t^T|u(s)|^2ds<\infty\Big\}.
\]

We impose the following standard hypotheses.

\noindent\textbf{(AS1)} The coefficients \(A, B, C, D, E\), and \(F\) are uniformly bounded on \([0, T]\) and \(\mathbb{F}\)-predictable. Moreover, \(\nu(\mathbb{Z})<\infty\).

\noindent\textbf{(AS2)} The weighting matrices \(Q:[0,T]\times\Omega\to\mathbb{S}^n\), \(R:[0,T]\times\Omega\to\mathbb{S}^m\), \(S:[0,T]\times\Omega\to\mathbb{R}^{m\times n}\) are uniformly bounded and \(\mathbb{F}\)-predictable; the terminal weight \(G:\Omega\to\mathbb{S}^n\) is essentially bounded and \(\mathcal{F}_T\)-measurable.

Under these assumptions, for any \((t,\xi)\in\mathcal{D}\) and \(u\in\mathcal{U}[t,T]\), equation \eqref{eq:1.1} admits a unique square-integrable c\`adl\`ag solution \(X(\cdot)\). The total cost associated with \((X,u)\) is
\[
L(t,\xi;u) \triangleq \langle G X(T),X(T)\rangle + \int_t^T \Big\langle\begin{pmatrix}Q(s)&S^\top(s)\\S(s)&R(s)\end{pmatrix}\binom{X(s)}{u(s)},\binom{X(s)}{u(s)}\Big\rangle ds,
\]
and the conditional cost functional is
\[
J(t,\xi;u) \triangleq \mathbb{E}\bigl[L(t,\xi;u)\mid\mathcal{F}_t\bigr].
\]

We formulate the stochastic LQ optimal control problem as follows.

\noindent\textbf{PROBLEM (SLQ).} For any given \((t, \xi) \in \mathcal{D}\), find a control \(u^* \in \mathcal{U}[t, T]\) such that
\[
J(t, \xi; u^*) = \operatorname*{ess\,inf}_{u \in \mathcal{U}[t, T]} J(t, \xi; u) \equiv V(t, \xi).
\]
Such a control is called an \emph{open-loop optimal control} for Problem (SLQ) at \((t,\xi)\). The corresponding trajectory \(X^*(\cdot)\) is the \emph{optimal state process}, and \((X^*, u^*)\) is the optimal pair. The mapping \((t, \xi) \mapsto V(t, \xi)\) is the \emph{stochastic value flow} of Problem (SLQ).

\subsection*{Literature Review}

\noindent\textbf{Classical stochastic LQ theory.}
The extension of deterministic LQ control to stochastic systems began with Wonham \cite{Wonham1968}, who introduced the stochastic Riccati equation (SRE) for Brownian-driven dynamics. Bismut \cite{Bismut1976} and Peng \cite{Peng1992} deepened the theory using backward stochastic differential equations (BSDEs), while El Karoui, Peng and Quenez \cite{ElKaroui1997} further clarified the connection between BSDEs and stochastic control. Tang \cite{Tang2003} gave the first comprehensive treatment of SLQ problems with random coefficients, establishing the fundamental equivalence: the problem is open-loop solvable if and only if the associated backward stochastic Riccati equation (BSRE) admits a solution. Tang \cite{Tang2015} subsequently developed a dynamic programming approach for the same class of problems. These works, however, assume the cost weighting matrices are positive (semi-)definite.

\noindent\textbf{Indefinite SLQ problems.}
When the cost weights may be indefinite---a setting motivated by portfolio selection and risk-sensitive control---the classical positive-definite theory breaks down. Chen, Li and Zhou \cite{Chen1998} first established that indefinite SLQ problems are equivalent to solvability of generalized SREs. Hu and Zhou \cite{Hu2003} and Qian and Zhou \cite{Qian2013} analyzed the existence of solutions to such indefinite SREs under various structural conditions. Du \cite{Du2015} introduced the notion of a \emph{subsolution} for indefinite SREs and proved that the existence of a subsolution is sufficient for solvability, providing a general and flexible criterion. Sun, Li and Yong \cite{SunLiYong2016} clarified the distinction between open-loop and closed-loop solvabilities, and L\"{u} \cite{Lu2019} established a well-posedness theory for indefinite SREs in a general framework. Moon and Duncan~\cite{MoonDuncan2020} gave an elementary proof for the indefinite SLQ problem with random coefficients, highlighting the role of uniform convexity. The indefinite paradigm has since been extended to two-person zero-sum differential games, including zero-sum Nash equilibria with random coefficients~\cite{Moon2020_Nash} and zero-sum and Stackelberg games~\cite{Basar2008,SunYongZhang2016,SunWangWen2023}.

\noindent\textbf{Jump-diffusion systems.}
Incorporating Poisson jumps allows models to capture sudden, discontinuous shocks that are ubiquitous in finance, insurance, and engineering. Tang and Li \cite{TangLi1994} derived necessary optimality conditions for jump-diffusion systems via stochastic maximum principles. Situ \cite{Situ2005} and Boel and Kohlmann \cite{Boel1980} established the semimartingale foundations for SDEPs and martingale optimality methods. Jeanblanc-Picqu\'{e} and Pontier \cite{Jeanblanc1990} applied jump-diffusion models to optimal portfolio problems. More recently, Li, Xiong and Yu \cite{LiXiongYu2021} and Moon~\cite{Moon2021_Stackelberg} studied Stackelberg games with jump diffusions, Li and Shi \cite{LiShi2022} characterized closed-loop solvability for SLQ with Poisson jumps, and infinite-dimensional evolution equations with jumps were treated in~\cite{WangTangMeng2025}.

\noindent\textbf{Four works most relevant to the present paper.}
Our work sits at the intersection of the three lines above: indefinite weights, random coefficients, and Poisson jumps. Four prior contributions are especially pertinent.

First, Sun, Xiong and Yong \cite{SunXiongYong2021} solved the indefinite SLQ problem with random coefficients in the pure diffusion (no-jump) case. Under a uniform convexity condition---weaker than classical positive definiteness---they proved existence and uniqueness of open-loop optimal controls and the unique solvability of the associated generalized SRE. A crucial technical step was establishing invertibility of the optimal state process via a stopping time argument; this argument, however, depends essentially on the left-continuity of the kernel of the value function and therefore does \emph{not} extend to the c\`adl\`ag paths induced by jumps.

Second, Moon and Chung~\cite{MoonChung2021} studied the same class of indefinite LQ problems for jump-diffusion models with random coefficients via a completion-of-squares approach. Using the It\^o--Wentzell formula together with an integro-type stochastic Riccati differential equation (ISRDE) and a BSDE with jumps, they completed the square in the cost functional and obtained an explicit closed-loop optimal control. Their method is direct and elegant. A key distinction is that their analysis \emph{assumes} the positive definiteness of a certain matrix combination (condition~(5) in~\cite{MoonChung2021}) as a hypothesis, whereas the present paper \emph{proves} the uniform positivity $\mathscr{N}(t)\ge\delta I_m$ directly from the uniform convexity condition without postulating it, and employs an algebraic inverse flow to establish the full SREJ theory.

Third, Li, Wu and Yu \cite{LiWuYu2018} addressed the indefinite LQ problem with random jumps using a relaxed compensator method that transforms the original indefinite problem into an auxiliary positive definite one via an adapted symmetric matrix process. Their approach provides a powerful existence theory, and the present paper complements it by developing an alternative route that works directly with the original system coefficients, without auxiliary processes.

Fourth, Zhang, Dong and Meng \cite{ZhangDongMeng2020} established the well-posedness of the backward stochastic Riccati equation with jumps (BSREJ) for jump-diffusion systems with random coefficients. Their analysis provides a rigorous semimartingale decomposition of the value function's kernel and the associated Riccati structure, but it is confined to the classical setting where the cost weighting matrices are positive (semi-)definite.

\noindent\textbf{Regime-switching and extended frameworks.}
Beyond the works above, the literature has seen vigorous activity extending the random-coefficient LQ paradigm to more complex stochastic environments. Wen, Li, Xiong and Zhang \cite{WenLiXiongZhang2022} established the closed-loop representation for SLQ with Markovian regime switching (without jumps) under a uniform convexity condition. Wu, Li and Zhang \cite{WuLiZhang2025} extended this analysis to the infinite horizon with regime-switching jumps. In the game-theoretic direction, Wu, Xiong and Zhang \cite{WuXiongZhang2024} treated zero-sum Stackelberg differential games with jumps and random coefficients, while Zhang, Li and Xiong \cite{ZhangLiXiong2021} examined open-loop/closed-loop solvability in the regime-switching framework.

\noindent\textbf{Constrained SLQ and financial applications.}
In the regime-switching jump-diffusion setting with constraints, Hu, Shi and Xu \cite{HuShiXu2022} studied an SLQ problem with cone control constraints. Using It\^o's lemma for Markov chains, they derived explicit optimal feedback via extended stochastic Riccati equations (ESREs) and proved well-posedness through multidimensional comparison theorems. Shi and Xu \cite{ShiXu2026} examined constrained SLQ with controlled jump size under regime switching, where the optimal feedback is expressed through fully coupled BSDEs with Poisson jumps (BSDEPs). In a financial context, Shi and Xu \cite{ShiXu2024} reduced an optimal mean-variance investment--reinsurance problem under the Cram\'er--Lundberg model with random coefficients to a constrained SLQ problem with nonnegative controls. Fu, Shi and Xu \cite{Fu2025} further introduced a system of BSDEs with jumps and singular terminal values for optimal liquidation under regime switching. More recently, Tang, Li and Wang~\cite{TangLiWang2026} employed an equivalent cost functional method (originating from~\cite{Yu2013}) to transform the indefinite SLQ problem with random jumps into a positive definite one, thereby obtaining a state-feedback representation via the associated indefinite SRE. Their approach shares the same transformation philosophy as~\cite{LiWuYu2018} and, notably, assumes that all coefficients are independent of the Poisson random measure (Assumption~(A3) in~\cite{TangLiWang2026}), so that the resulting SRE is driven solely by Brownian motion without a compensated jump martingale term.

Collectively, these works have built a robust mathematical infrastructure for LQ control of random-coefficient jump-diffusion systems. \textbf{What remains open} is a theory for the indefinite case that (i) handles the c\`adl\`ag nature of jump-diffusion paths without relying on left-continuity, (ii) provides a constructive algebraic criterion for solvability expressed directly in terms of the original system coefficients, and (iii) delivers a closed-loop feedback synthesis without auxiliary compensator processes. The present paper provides exactly such a theory.

\subsection*{Gaps and Contributions}

Despite the advances reviewed above, a unified and rigorous theory for indefinite stochastic LQ optimal control with random coefficients and Poisson jumps---free of relaxation techniques---has remained incomplete. The stopping time argument of \cite{SunXiongYong2021} is fundamentally inapplicable to the jump-diffusion setting because the c\`adl\`ag nature of the state and value processes destroys the left-continuity that argument requires. The relaxed compensator method of \cite{LiWuYu2018} provides a powerful existence theory via an auxiliary positive definite problem, but the construction of the auxiliary process is not directly expressible in terms of the original system parameters. The BSREJ framework of \cite{ZhangDongMeng2020} requires positive definiteness and does not cover the indefinite case.

This paper shifts the analytical paradigm to fill these gaps. Our main contributions are threefold.

\begin{itemize}
	\item \textbf{First}, we bypass entirely the need for invertibility of the optimal state's fundamental matrix and eliminate any hypothetical ``compensator.'' Instead of following the stopping-time approach of \cite{SunXiongYong2021} or the relaxation techniques of \cite{LiWuYu2018}, we center our analysis on a reference zero-control base system. Under a natural non-singularity condition $\det(I+E(s,e)) \ge \delta > 0$ (Assumption \ref{assum_nonsingular}), we explicitly construct an algebraic inverse flow from this base system. This allows us to extract the kernel matrix of the value function directly from the zero-control dynamics, rendering both the invertibility of the optimal state and external auxiliary processes irrelevant.
	
	\item \textbf{Second}, using the constructed inverse flow and martingale theory, we extract the semimartingale structure of the value function's kernel matrix and establish comprehensive regularity properties: the finite variation part is pathwise $L^1$-integrable, the martingale parts are $L^2$-integrable over the time horizon, and the integrated variations possess finite moments of every order. This exceptional regularity supplies the analytical foundation needed to handle jump-induced terms. Based on this, we \textbf{prove}---rather than postulate---that the associated matrix \(\mathscr{N}(t)\) obeys the uniform lower bound \(\mathscr{N}(t)\ge\delta I_m$ under the sole uniform convexity condition (Lemma \ref{lem:6.1}). This is in contrast to~\cite{MoonChung2021}, where the positive definiteness of the analogous matrix is assumed as a hypothesis, and to~\cite{ZhangDongMeng2020}, where it follows from the assumed positive definiteness of the cost matrices. We then prove the existence of an adapted solution to the generalized SREJ under general random coefficients \textbf{without any restriction on the control coefficient \(F\) in the jump part}, and derive an exact closed-loop feedback representation of the optimal control (Theorem \ref{thm:verification}).
	
	\item \textbf{Third}, we demonstrate the practical relevance of our theoretical results through a concrete financial example (Section \ref{sec:application}). Here the uniform convexity condition reduces to an explicit inequality involving the model parameters, which can be verified directly. This shows that the indefinite LQ framework is not only mathematically natural but also practically applicable, and that the uniform convexity condition---while not expressible in closed form for general systems---can be concretely checked in important special cases.
\end{itemize}

In summary, this paper delivers a complete theory for indefinite stochastic LQ control of jump-diffusion systems with random coefficients: open-loop solvability, existence and uniqueness of solutions to the generalized SREJ, uniform positivity of \(\mathscr{N}\), and closed-loop feedback synthesis---all valid for general coefficients including arbitrary \(F\). The results are obtained without relaxation techniques or extra invertibility assumptions on the optimal state process.

\subsection*{Organization of the Paper}

The remainder of the paper is structured as follows.
\begin{itemize}[leftmargin=*,nosep]
    \item \textbf{Section~\ref{sec:prelim}} sets up notations, function spaces, and the indefinite SLQ problem for jump-diffusion systems, and defines open-loop solvability.
    \item \textbf{Section~\ref{sec:DPP_semimartingale}} recalls the quadratic structure of the value function and the semimartingale property of its kernel from~\cite{ZhangDongMeng2020}, both of which remain valid under the uniform convexity condition.
    \item \textbf{Section~\ref{sec:SREJ}}---the core of the paper---introduces the generalized SREJ and proves its solvability: existence of an adapted solution for general coefficients, the uniform lower bound \(\mathscr{N}(t)\ge\delta I_m\), and the exact closed-loop feedback representation.
    \item \textbf{Section~\ref{sec:feedback}} establishes the verification theorem, showing that the feedback control constructed from the SREJ solution is the unique open-loop optimal control.
    \item \textbf{Section~\ref{sec:application}} presents a financial application---a zero-excess-return portfolio problem with indefinite terminal weight---where the uniform convexity condition reduces to an explicit parametric inequality.
    \item \textbf{Section~\ref{sec:conclusion}} concludes with a summary and directions for future research.
\end{itemize}

\section{Preliminaries and Problem Formulation}
\label{sec:prelim}

This section provides the necessary preliminaries and fundamental results for our subsequent analysis. We start by establishing the following notations:

\begin{itemize}[leftmargin=*,nosep]
    \item $\mathbb{R}^{n}$: the $n$-dimensional Euclidean space with norm $|\cdot|$.
    \item $\mathbb{R}^{n \times m}$: the space of $n \times m$ real matrices; $\mathbb{R}^n = \mathbb{R}^{n \times 1}$; $\mathbb{R} = \mathbb{R}^1$.
    \item $\mathbb{S}^n$: the space of all symmetric $n \times n$ real matrices.
    \item $I_n$: the identity matrix of size $n$.
    \item $M^\top$: the transpose of $M$; $\operatorname{tr}(M)$: the trace of $M$.
    \item $\langle \cdot, \cdot \rangle$: the Frobenius inner product, $\langle A, B \rangle = \operatorname{tr}(A^\top B)$.
    \item $|M|$: the Frobenius norm, $|M| = [\operatorname{tr}(M^\top M)]^{1/2}$.
\end{itemize}

We further introduce the following spaces of random variables and processes. For a Euclidean space $\mathbb{H}=\mathbb{R}^n,\mathbb{R}^{m\times n},\mathbb{S}^n$, etc.,

\begin{itemize}[leftmargin=*,nosep]
    \item $L_{\mathcal{F}_{t}}^{\infty}(\Omega ; \mathbb{H})$: bounded, $\mathcal{F}_t$-measurable, $\mathbb{H}$-valued random variables.
    \item $L_{\mathcal{F}_{t}}^{2}(\Omega ; \mathbb{H})$: $\mathcal{F}_{t}$-measurable $\xi: \Omega \rightarrow \mathbb{H}$ with $\mathbb{E}[|\xi|^{2}]<\infty$.
    \item $L^{2}_{\mathbb{F}}(t, T ; \mathbb{H})$: $\mathbb{F}$-predictable $\psi: [t, T]\times \Omega \rightarrow \mathbb{H}$ with $\mathbb{E}\int_{t}^{T} |\psi(s)|^{2}ds<\infty$.
    \item $L_{\mathbb{F}}^{\infty}(\Omega; L^p(t, T; \mathbb{H}))$: $\mathbb{F}$-predictable $X : [t, T] \times \Omega \rightarrow \mathbb{H}$ with $\operatorname{ess}\sup_{\omega \in \Omega} \int_t^T |X(s, \omega)|^p ds < \infty$, $p\in[1,\infty)$.
    \item $L^{2}_{\mathbb{F}}(\Omega;C([t,T];\mathbb{H}))$: $\mathbb{F}$-adapted, RCLL $\Phi:[t, T]\times \Omega \rightarrow \mathbb{H}$ with $\mathbb{E}[ \sup_{t \leq r \leq T} |\Phi(r)|^{2}] < \infty$.
    \item $L_{\mathbb{F}}^{\infty}(\Omega; C([t, T]; \mathbb{H}))$: bounded, $\mathbb{F}$-adapted, RCLL, $\mathbb{H}$-valued processes.
    \item $L^{2}_{\mathbb{F}}(\Omega; L^{1}(t,T;\mathbb{H}))$: $\mathbb{F}$-predictable $X: [t, T] \times \Omega \rightarrow \mathbb{H}$ with $\mathbb{E}\bigl(\int_{t}^{T}|X(s)| d s\bigr)^{2}<\infty$.
    \item $L^{\nu, 2}(\mathbb{Z};\mathbb{H})$: measurable $\varrho: \mathbb{Z} \rightarrow \mathbb{H}$ with $\int_{\mathbb{Z}}|\varrho (e)|^{2} \nu(d e)<\infty$.
    \item $L_{\mathbb{F}}^{\nu, 2}([t, T]\times \mathbb Z;\mathbb{H})$: $L^{\nu, 2}(\mathbb{Z} ;\mathbb{H})$-valued $\varrho$ admitting a predictable version, $\mathbb{E}\int_{t}^{T}\int_\mathbb{Z}|\varrho (r, e)|^{2}\nu(de) d r<\infty$.
    \item $L_{\mathbb{F}}^{\infty}(\Omega; L^{\nu,2}([t, T]\times\mathbb{Z} ; \mathbb{H}))$: $\mathbb{F}$-predictable $X$ with
    \[
    \operatorname{ess}\sup_{\omega \in \Omega} \int_t^T\int_{\mathbb{Z}} |X(s,e, \omega)|^2\nu(de) ds < \infty.
    \]
\end{itemize}

To distinguish the inner product on the space $\mathcal{U}[t,T]$ from the standard Euclidean inner product, we adopt the notation:
\[
[[u, v]] = \mathbb{E} \int_{t}^{T}\langle u(s), v(s)\rangle d s, \qquad \text{for } u,v\in\mathcal{U}[t,T].
\]

Define the set of admissible initial data
\[
\mathcal{D} = \big\{ (t,\xi) \mid t\in[0,T],\ \xi \in L_{\mathcal{F}_t}^2(\Omega;\mathbb{R}^n) \big\},
\]
and the admissible control space
\[
\mathcal{U}[t,T] = \Big\{ u:[t,T]\times\Omega\to\mathbb{R}^m \mid u \text{ is } \mathbb{F}\text{-predictable},\ \mathbb{E}\int_t^T|u(s)|^2ds<\infty \Big\}.
\]

\subsection{Well-posedness of SDEPs}
We recall the well-posedness (existence and uniqueness) of solutions for linear SDEPs with random coefficients. Consider the following linear SDEP
\begin{equation}\label{eq:2.1}
    \left\{\begin{array}{l}
        d X(s)=[A(s) X(s)+b(s)] d s+[C(s) X(s)+\sigma(s)] d W(s)+\displaystyle\int_{\mathbb{Z}}\left[E(s,e)X(s-)+r(s,e)\right] \tilde{\mu}(ds, d e), \quad s \in[t, T] \\
        X(t)=\xi
    \end{array}\right.
\end{equation}

\begin{lem}\label{lem:2.1}
    Let the coefficients \(A, C,\) and \(E\) satisfy the following integrability conditions:
    \[
    A(\cdot) \in L_{\mathbb{F}}^{\infty}\left(\Omega ; L^{1}\left(0, T ; \mathbb{R}^{n \times n}\right)\right), \quad
    C(\cdot) \in L_{\mathbb{F}}^{\infty}\left(\Omega ; L^{2}\left(0, T ; \mathbb{R}^{n \times n}\right)\right), \quad
    E(\cdot)\in L_{\mathbb{F}}^{\infty}\left(\Omega ; L^{\nu,2}\left([0, T]\times\mathbb{Z} ; \mathbb{R}^{n \times n}\right)\right).
    \]
    Under these hypotheses, for any given initial data \((t,\xi)\in\mathcal{D}\) and non-homogeneous terms \(b \in L_{\mathbb{F}}^{2}\left(\Omega ; L^{1}\left(t, T ; \mathbb{R}^{n}\right)\right)\), \(\sigma \in L_{\mathbb{F}}^{2}\left(t, T ; \mathbb{R}^{n}\right)\) and \(r\in L_{\mathbb{F}}^{\nu,2}\left([t, T]\times\mathbb{Z} ; \mathbb{R}^{n}\right)\), equation \eqref{eq:2.1} admits a unique solution \(X \in L_{\mathbb{F}}^{2}\left(\Omega ; C\left([t, T] ; \mathbb{R}^{n}\right)\right)\). Moreover, the solution satisfies the estimate
    \begin{equation}
        \begin{aligned}
            \mathbb{E}\Bigl[\sup_{t \leq s \leq T}|X(s)|^{2}\Bigr]
            \leq K\,\mathbb{E}\Bigl[  |\xi|^{2} + \Bigl(\int_{t}^{T}|b(s)|\,ds\Bigr)^{2}
            + \int_{t}^{T}|\sigma(s)|^{2}\,ds  + \int_{t}^{T}\!\!\int_{\mathbb{Z}}|r(s,e)|^{2}\,\nu(de)\,ds \Bigr],
        \end{aligned}
    \end{equation}
    for some constant \(K>0\) determined solely by \(A, C, E,\) and \(T\).
\end{lem}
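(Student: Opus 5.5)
We prove Lemma~\ref{lem:2.1} by a contraction-mapping argument on a weighted space, with an a priori estimate that is uniform in the starting time. The coefficients \(A,C,E\) are only integrable in time, with \(\omega\)-uniform bounds, and not uniformly bounded. So we cannot use a plain Picard iteration with a constant Lipschitz bound. Instead we use the deterministic-type control functions
\[
\alpha(s):=\operatorname*{ess\,sup}_{\omega}\Big(|A(s)|+|C(s)|^2+\int_{\mathbb Z}|E(s,e)|^2\nu(de)\Big)
\]
in a weaker sense. Because only the integral \(\int_0^T(\cdot)\,ds\) is essentially bounded, we work with the random increasing process
\[
\Lambda(s)=\int_t^s\Big(|A(r)|+|C(r)|^2+\int_{\mathbb Z}|E(r,e)|^2\nu(de)\Big)dr .
\]
This process satisfies \(\Lambda(T)\le M\) a.s. for a deterministic constant \(M\).

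\textbf{Step 1 (partition of \([t,T]\)).} The process \(\Lambda\) is continuous, adapted and bounded by \(M\). For \(\varepsilon>0\) we define stopping times \(\tau_0=t\) and \(\tau_{k+1}=\inf\{s>\tau_k:\Lambda(s)-\Lambda(\tau_k)\ge\varepsilon\}\wedge T\). Then \(\tau_N=T\) for \(N=\lceil M/\varepsilon\rceil+1\), and this \(N\) is deterministic.

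\textbf{Step 2 (contraction on each random interval).} On \([\![\tau_k,\tau_{k+1}]\!]\) we consider the map \(\Gamma\) that sends \(Y\) to the right-hand side of \eqref{eq:2.1}, started from \(X(\tau_k)\). We estimate \(\mathbb E\sup|\Gamma Y-\Gamma Y'|^2\) using three bounds:
\begin{itemize}
\item the drift, via \(\big(\int|A||Y-Y'|dr\big)^2\le\varepsilon^2\sup|Y-Y'|^2\);
\item the Brownian integral, via the BDG inequality, giving \(\le c\,\mathbb E\int|C|^2|Y-Y'|^2dr\le c\varepsilon\,\mathbb E\sup|Y-Y'|^2\);
\item the compensated Poisson integral, via the BDG/Doob inequality with the \(\nu\)-weighted quadratic variation, giving \(c\varepsilon\) in the same way, using \(\nu(\mathbb Z)<\infty\) only through square integrability.
\end{itemize}
Taking \(\varepsilon\) small yields a contraction in \(L^2_{\mathbb F}(\Omega;C)\). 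Concatenating the solutions over the \(N\) pieces, with \(X(\tau_{k+1})\) fed into the next piece, gives existence. Uniqueness follows from the same contraction applied to the difference of two solutions.

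\textbf{Step 3 (the estimate).} Apply the same bounds to \(X\) itself on each piece, now including the inhomogeneous terms. This gives
\[
\mathbb E\sup_{[\tau_k,\tau_{k+1}]}|X|^2\le 4\,\mathbb E|X(\tau_k)|^2+c\,\varepsilon\,\mathbb E\sup|X|^2+c\,\mathbb E\big[\text{data on the piece}\big].
\]
The data terms are \(\big(\int|b|\big)^2\), \(\int|\sigma|^2\) and \(\int\!\!\int|r|^2\nu\). Absorbing the \(\varepsilon\)-term and iterating \(N\) times produces the stated bound with \(K\) depending on \(N\) and \(c\), hence only on \(M\) (that is, on \(A,C,E,T\)). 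To make the absorption legitimate, \(\mathbb E\sup|X|^2\) must be finite a priori. This is guaranteed by the fixed-point construction, or alternatively by localizing with \(\sigma_R=\inf\{s:|X(s)|\ge R\}\) and letting \(R\to\infty\) via Fatou's lemma.

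\textbf{Main obstacle.} The delicate point is that \(\Lambda\) is random. The intervals are therefore stochastic, and the contraction must be set up on stochastic intervals with the initial value \(X(\tau_k)\) being only \(\mathcal F_{\tau_k}\)-measurable. We must also check that the BDG constants do not interact badly with the random partition. We handle this with pathwise bounds \(\int_{\tau_k}^{\tau_{k+1}}|A|\le\varepsilon\) before taking expectations. For the stochastic integrals we use the bound \(\mathbb E\int_{\tau_k}^{\tau_{k+1}}|C|^2|Z|^2\le\varepsilon\,\mathbb E\sup|Z|^2\), which holds pathwise inside the expectation. This avoids any need for deterministic time steps.
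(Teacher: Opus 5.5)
Your argument is correct. It gives a self-contained proof where the paper gives none: the paper cites Sun and Yong for the Brownian case and says the compensated Poisson integral is handled by ``standard parallel arguments for purely discontinuous martingales.''

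Your proof makes explicit two points the citation hides.

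\begin{itemize}
\item The hypotheses bound only $\int_0^T\bigl(|A|+|C|^2+\int_{\mathbb Z}|E|^2\nu(de)\bigr)ds$ uniformly in $\omega$, so a deterministic partition of $[t,T]$ into small pieces is not available. Your stopping-time partition driven by the continuous process $\Lambda$ is the right replacement. It has a deterministic number $N$ of pieces, and the pathwise bounds $\int_{\tau_k}^{\tau_{k+1}}|A|\,dr\le\varepsilon$ and $\int_{\tau_k}^{\tau_{k+1}}|C|^2dr\le\varepsilon$ are used inside the expectation. This gives both the contraction and the a priori bound, with $K$ depending only on $M$. Concatenation and induction on the pieces then give existence and uniqueness in $L^2_{\mathbb F}(\Omega;C([t,T];\mathbb R^n))$.
\item The jump term enters only through Doob's $L^2$ inequality and the isometry with compensator $\nu(de)\,dr$. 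So $\nu(\mathbb Z)<\infty$ plays no role in this lemma.
\end{itemize}

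An alternative route to the estimate avoids stochastic intervals. Apply It\^o's formula to $e^{-\beta\Lambda(s)}|X(s)|^2$ with $\beta$ large, then remove the weight at the cost of a factor $e^{\beta M}$. This is presumably what your opening phrase ``contraction-mapping argument on a weighted space'' alludes to, but it is not what you do, so rephrase that sentence.

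Also drop the function $\alpha(s)$ defined through an essential supremum over $\omega$. The hypotheses do not make it integrable in $s$, which is exactly why the partition must be random, and you never use it.
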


\begin{rmk}
Lemma~\ref{lem:2.1} accommodates unbounded coefficients \(A, C,\) and \(E\). The well-posedness and energy estimates for the continuous dynamics follow directly from Sun and Yong~\cite{SunYong2014}; incorporating Poisson jumps requires only standard parallel arguments for purely discontinuous martingales, so the detailed proof is omitted. Moreover, under the same boundedness assumptions, for any \(p\ge 2\) the solution \(X\) satisfies the higher-order moment estimate
\[
\begin{multlined}
\mathbb{E}\Bigl[\sup_{t\le s\le T}|X(s)|^p\Bigr]\le K_p\,\mathbb{E}\Bigl[|\xi|^p
+\Bigl(\int_t^T|b(s)|ds\Bigr)^p
+\Bigl(\int_t^T|\sigma(s)|^2ds\Bigr)^{p/2} \\
+\Bigl(\int_t^T\!\!\int_{\mathbb{Z}}|r(s,e)|^{2}\nu(de)ds\Bigr)^{p/2}
+\int_t^T\!\!\int_{\mathbb{Z}}|r(s,e)|^{p}\nu(de)ds\Bigr]
\end{multlined}
\]
with a constant \(K_p\) depending only on \(p\), \(T\) and the bounds of the coefficients. For \(p=2\) this reduces to the estimates stated in Lemma~\ref{lem:2.1}.
\end{rmk}

\subsection{Fundamental Solution and Inverse Flow}
A crucial ingredient for our analysis is the invertibility of the fundamental solution of the uncontrolled system. The following assumption is natural for jump-diffusion flows: it guarantees that the jump map \(x \mapsto x + E(t,e)x\) is a bijection with bounded inverse, preventing explosion of the inverse flow.

\begin{ass}\label{assum_nonsingular}
    For all \(t \in [0,T]\) and \(e \in \mathbb{Z}\), the matrix \(I + E(t,e)\) is invertible, and there exists a constant \(\delta > 0\) such that
    \[
    \left| \det\left(I + E(t,e)\right) \right| \geq \delta.
    \]
\end{ass}

\begin{lem}\label{lem:2.2}
    Let \textbf{(AS1)} and Assumption \ref{assum_nonsingular} hold. Define $\Phi(\cdot)$ as the unique solution to the matrix-valued SDEP
    \begin{equation}\label{eq:2.5}
        \begin{cases}
            d\Phi(t) = A(t)\Phi(t)dt + C(t)\Phi(t)dW(t) + \displaystyle\int_{\mathbb{Z}} E(t,e)\Phi(t-)\,\tilde{\mu}(de,dt),\\
            \Phi(0) = I_n.
        \end{cases}
    \end{equation}
    Then $\Phi(t)$ is invertible for all $t$ a.s., and its inverse $\Psi(t):=\Phi(t)^{-1}$ satisfies the SDEP
    \begin{equation}\label{eq:2.6}
        \begin{cases}
            d\Psi(t) = \Psi(t)\Big[-A(t) + C(t)^\top C(t) - \displaystyle\int_{\mathbb{Z}} E(t,e)(I_n+E(t,e))^{-1}\nu(de)\Big]dt \\
            \qquad\quad - \Psi(t)C(t)\,dW(t) - \displaystyle\int_{\mathbb{Z}} \Psi(t-)E(t,e)(I_n+E(t,e))^{-1}\,\tilde{\mu}(de,dt),\\
            \Psi(0)=I_n.
        \end{cases}
    \end{equation}
    Moreover, for any initial pair $(t,\xi)$ and any admissible control $u$, the solution $X(\cdot)$ of \eqref{eq:1.1} admits the explicit representation
   \begin{equation}\label{eq:2.4state}
\begin{split}
X(s) = \Phi(s)\Psi(t)\xi &+ \int_t^s \Phi(s)\Psi(r)\big[B(r)u(r) - \widetilde{\Lambda}(r)u(r)\big]dr \\
&+ \int_t^s \Phi(s)\Psi(r)D(r)u(r)\,dW(r) \\
&+ \int_t^s\int_{\mathbb{Z}} \Phi(s)\Psi(r-)\Gamma(r,e)u(r)\,\tilde{\mu}(de,dr),
\end{split}
\end{equation}
    where the correction terms $\widetilde{\Lambda}(r)$ and $\Gamma(r,e)$ are given by
    \[
    \widetilde{\Lambda}(r) := C(r)D(r) + \int_{\mathbb{Z}} E(r,e)(I_n+E(r,e))^{-1}F(r,e)\,\nu(de),\qquad
    \Gamma(r,e) := (I_n+E(r,e))^{-1}F(r,e).
    \]
\end{lem}

\begin{proof}
    The proof follows from a standard variation-of-constants argument for linear SDEPs; see e.g. \cite{Situ2005}, Chapter 4.
\end{proof}

\begin{lem}\label{lem:2.3}
    Under the same assumptions as Lemma~\ref{lem:2.2}, the state process \(X(\cdot)\) generates a stochastic flow of homeomorphisms. More precisely, for almost every \(\omega\in\Omega\) and each \(s\in[t,T]\), the mapping \(x\mapsto X^{t,x;u}(s,\omega)\) is a continuous bijection from \(\mathbb{R}^n\) onto \(\mathbb{R}^n\), and its inverse \(y\mapsto Y^{t,y;u}(s,\omega)\) is also continuous. The inverse flow \(Y^{t,\cdot;u}(\cdot)\) satisfies
    \begin{equation}\label{eq:2.7}
        Y^{t,X^{t,x;u}(s);u}(s) = x, \qquad \forall\, s\in[t,T],\ \forall x\in\mathbb{R}^n,
    \end{equation}
    and equivalently,
    \begin{equation}\label{eq:2.7b}
        X^{t,Y^{t,x;u}(s);u}(s) = x, \qquad \forall\, s\in[t,T],\ \forall x\in\mathbb{R}^n.
    \end{equation}
   Moreover, \(Y^{t,x;u}(s)\) admits the explicit representation
\begin{equation}\label{eq:2.8}
    \begin{aligned}
        Y^{t,x;u}(s) = &\ \Psi(s)\Phi(t)x \\
        &+ \int_t^s \Psi(r)\Bigg[ B(r)u(r) - C(r)D(r)u(r) \\
        &\qquad\qquad - \int_{\mathbb{Z}} E(r,e)\bigl(I_n+E(r,e)\bigr)^{-1}F(r,e)u(r)\,\nu(de) \Bigg] dr \\
        &+ \int_t^s \Psi(r) D(r)u(r)\,dW(r) \\
        &+ \int_t^s \int_{\mathbb{Z}} \Psi(r-)\,(I_n+E(r,e))^{-1}F(r,e)u(r)\,\tilde{\mu}(de,dr),
    \end{aligned}
\end{equation}
    where \(\Phi(\cdot)\) and \(\Psi(\cdot)\) are defined in \eqref{eq:2.5} and \eqref{eq:2.6}, respectively.
\end{lem}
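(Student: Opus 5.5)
The plan is to exploit the fact that, for fixed $\omega$, $u$ and $s$, the map $x\mapsto X^{t,x;u}(s)$ is affine. By Lemma~\ref{lem:2.2},
\[
X^{t,x;u}(s)=\Phi(s)\Psi(t)x+\Phi(s)M_t(s),
\]
where $M_t(s)$ collects the $dr$, $dW$ and $\tilde\mu$ integrals in \eqref{eq:2.4state}, with the factor $\Phi(s)$ pulled out:
\[
M_t(s):=\int_t^s\Psi(r)\big[B-\widetilde\Lambda\big]u\,dr+\int_t^s\Psi(r)Du\,dW(r)+\int_t^s\!\!\int_{\mathbb Z}\Psi(r-)\Gamma u\,\tilde\mu(de,dr).
\]
The random vector $M_t(s)$ does not depend on $x$, so a single null set can be fixed for all $x$.

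First, I will use Lemma~\ref{lem:2.2}, which relies on Assumption~\ref{assum_nonsingular}, to get that $\Phi(s)$ and $\Psi(t)$ are invertible for all times outside one null set. Indeed, $\Psi\Phi\equiv I_n$ follows from It\^o's product formula applied to \eqref{eq:2.5}--\eqref{eq:2.6}. The jump term cancels because $(I-E(I+E)^{-1})(I+E)=I$, and the $dt$ terms cancel because of the $C^\top C$ and $\nu$-corrections. Hence the affine map $x\mapsto X^{t,x;u}(s)$ has invertible linear part $\Phi(s)\Psi(t)$. It is therefore a bijection of $\mathbb R^n$, continuous (indeed Lipschitz with constant $|\Phi(s)\Psi(t)|$), and its inverse is again affine and continuous. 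This gives the homeomorphism claim and, by construction, both \eqref{eq:2.7} and \eqref{eq:2.7b}. Since the affine coefficients are c\`adl\`ag adapted in $s$, the inverse is a c\`adl\`ag adapted flow.

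Second, I will identify the inverse explicitly and compare it with \eqref{eq:2.8}. Solving $y=\Phi(s)\Psi(t)x+\Phi(s)M_t(s)$ for $x$ gives
\[
x=\Phi(t)\Psi(s)y-\Phi(t)M_t(s).
\]
To establish \eqref{eq:2.8} as printed, I would compute $d\big(\Psi(s)\Phi(t)x\big)=d\Psi(s)\,\Phi(t)x$ from \eqref{eq:2.6}. I would also expand the integrand in \eqref{eq:2.8}: $B-CD-\int E(I+E)^{-1}F\,\nu(de)$ is exactly $B-\widetilde\Lambda$, and $(I+E)^{-1}F=\Gamma$. This shows that the right-hand side of \eqref{eq:2.8} equals $\Psi(s)\Phi(t)x+M_t(s)$. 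The remaining task is to check that this process, inserted into $X^{t,\cdot;u}(s)$, returns $x$, as required by \eqref{eq:2.7b}.

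\textbf{This verification is the main obstacle, and I expect it to fail in general.} Inserting $Y=\Psi(s)\Phi(t)x+M_t(s)$ gives
\[
X^{t,Y;u}(s)=\Phi(s)\Psi(t)\Psi(s)\Phi(t)x+\Phi(s)\Psi(t)M_t(s)+\Phi(s)M_t(s).
\]
This reduces to $x$ only if $\Phi(s)\Psi(t)\Psi(s)\Phi(t)=I_n$ and $\big(\Phi(s)\Psi(t)+\Phi(s)\big)M_t(s)=0$. Neither identity holds for general noncommuting coefficients or for $u\neq0$. The printed formula also disagrees with the inverse derived above in two ways: it has $\Psi(s)\Phi(t)$ in place of $\Phi(t)\Psi(s)$, and $+M_t(s)$ in place of $-\Phi(t)M_t(s)$.

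I would therefore present the first step as the proof of the homeomorphism part together with \eqref{eq:2.7}--\eqref{eq:2.7b}, with the inverse given by $\Phi(t)\Psi(s)y-\Phi(t)M_t(s)$. I would then flag explicitly that \eqref{eq:2.8}, read literally, should be checked against this formula. It can coincide with the true inverse only in special cases, for example the scalar case with $u\equiv0$ and $t=0$. Otherwise, \eqref{eq:2.8} has to be read as a definition of the process $\Psi(s)\Phi(t)x+M_t(s)$ rather than as the inverse appearing in \eqref{eq:2.7}.
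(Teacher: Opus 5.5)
Your argument is correct, and so is your diagnosis of \eqref{eq:2.8}. By \eqref{eq:2.4state}, $\Psi(s)X^{t,x;u}(s)=\Psi(t)x+M_t(s)$. Hence the inverse of $x\mapsto X^{t,x;u}(s)$ is $y\mapsto\Phi(t)\bigl[\Psi(s)y-M_t(s)\bigr]$, not $\Psi(s)\Phi(t)y+M_t(s)$. A one-line counterexample confirms this. Take $n=m=1$, $A=C=D=E=F=0$ and $B=1$. Then $\Phi=\Psi\equiv1$ and $X^{t,x;u}(s)=x+\int_t^s u(r)\,dr$, whose inverse is $y-\int_t^s u(r)\,dr$. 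But \eqref{eq:2.8} returns $y+\int_t^s u(r)\,dr$, so \eqref{eq:2.7} fails whenever $\int_t^s u(r)\,dr\neq0$.

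The paper's proof cites Kunita and Situ for the homeomorphism property. It then asserts that \eqref{eq:2.8} follows by inverting \eqref{eq:2.4state} using $\Psi=\Phi^{-1}$. Carrying out that inversion gives exactly your formula, not the printed one. So for the explicit inverse you follow the paper's intended route, done correctly. For the homeomorphism part, your affine argument is more elementary than the appeal to general stochastic-flow theory. It also gives a single exceptional null set for all $x$ and $s$, and explicit Lipschitz constants.

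Two small refinements. First, with $u\equiv0$ the printed formula agrees with the true inverse exactly when $\Phi(t)$ and $\Psi(s)$ commute, e.g.\ $n=1$ for every $t$, or $t=0$ in any dimension. This is somewhat broader than your example. Second, the later use of the inverse flow, in Step~2 of Lemma~\ref{lem:6.1}, only needs bijectivity and the Jacobian $|\det(\Phi(t)\Psi(s))|$. Neither is affected by the error.

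One aside in your first step is inaccurate. With \eqref{eq:2.6} as printed, It\^o's product rule does not make the $dt$ terms of $d(\Psi\Phi)$ cancel. The leftover drift is $\Psi\bigl(C^\top C-C^2-\int_{\mathbb Z}E\,\nu(de)\bigr)\Phi$. The correct drift coefficient of $\Phi^{-1}$ is $\Psi\bigl[-A+C^2+\int_{\mathbb Z}E(I_n+E)^{-1}E\,\nu(de)\bigr]$, where $C^2=CC$.

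Your proof does not actually need this verification. It uses only two facts. The first is that $\Phi(s)$ is invertible for all $s$ a.s. Between the a.s.\ finitely many jump times, $\Phi$ solves a continuous linear SDE, and at each jump it is left-multiplied by the invertible matrix $I_n+E$. The second is the representation \eqref{eq:2.4state}, which is correct and follows from It\^o's formula for $\Phi^{-1}X$ with the corrected dynamics. Cite these facts rather than the printed \eqref{eq:2.6}.
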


\begin{proof}
    The homeomorphic property is a standard result in the theory of stochastic flows for jump-diffusions; see, e.g., \cite{Kunita1990,Situ2005}. Formula \eqref{eq:2.8} is obtained by inverting the explicit representation \eqref{eq:2.4state} and using the fact that \(\Psi = \Phi^{-1}\).
\end{proof}

\subsection{Convexity and Open-Loop Solvability}
To handle the indefinite cost weights, we introduce the following uniform convexity condition, which replaces the classical positive definiteness assumptions.

\begin{defn}\label{def:uniform_convexity}
    The cost functional \(J(t,\xi;u)\) is said to be \textbf{uniformly convex} if there exists a constant \(\delta>0\) such that for all \(u\in\mathcal{U}[t,T]\),
    \[
    J(t,0;u) \ge \delta\,\mathbb{E}\int_t^T |u(s)|^2 ds.
    \]
\end{defn}

Throughout the paper we work under the following standing hypothesis.

\textbf{(UC)} The uniform convexity condition holds for every \(t\in[0,T]\).

We now rigorously define the concepts of open-loop solvability.

\begin{defn} \label{def:open_loop_solvability}
    Problem (SLQ) is said to be:
    \begin{itemize}
        \item[(i)] \textbf{(uniquely) open-loop solvable at \((t, \xi) \in \mathcal{D}\)}, if there exists a (unique) \(u^* \in \mathcal{U}[t, T]\) such that for any \(u \in \mathcal{U}[t, T]\),
        \[
        J(t, \xi; u^*) \leq J(t, \xi; u);
        \]
        \item[(ii)] \textbf{(uniquely) open-loop solvable at \(t\)}, if it is (uniquely) open-loop solvable at \((t, \xi)\) for all \(\xi \in L_{\mathcal{F}_t}^2(\Omega; \mathbb{R}^n)\);
        \item[(iii)] \textbf{(uniquely) open-loop solvable on \([0, T]\)}, if it is (uniquely) open-loop solvable at any \(t \in [0, T]\).
    \end{itemize}
\end{defn}

\section{Quadratic Structure and Semimartingale Property of the Value Process}
\label{sec:DPP_semimartingale}

We recall the dynamic programming principle (DPP) and the fact that the value process admits a quadratic form whose kernel is a semimartingale. These results were established in~\cite{ZhangDongMeng2020} for the positive definite case; they rely only on the quadratic structure and the existence of optimal controls---which the uniform convexity condition (UC) guarantees via Proposition~\ref{prop:quadratic}---and therefore extend to our indefinite setting without change. We state them below and refer to~\cite{ZhangDongMeng2020} for the proofs.

\subsection{Quadratic Structure of the Value Function}
Let $\mathcal{T}$ denote the set of all $\mathbb{F}$-stopping times taking values in $[0,T]$. For any $\tau\in\mathcal{T}$ and $\xi\in L^2_{\mathcal{F}_\tau}(\Omega;\mathbb{R}^n)$, the stochastic value flow is defined as
\[
V(\tau,\xi)=\mathop{\mathrm{ess\,inf}}_{u\in\mathcal{U}[\tau,T]} J(\tau,\xi;u).
\]

The following proposition shows that $V(\tau,\cdot)$ is quadratic and that the kernel is an $\mathbb{S}^n$-valued random variable. The proof uses the same polarization argument as in \cite[Lemma~3.2]{ZhangDongMeng2020} and the uniform convexity to guarantee solvability.

\begin{prop}\label{prop:quadratic}
Assume {\bf (AS1)--(AS2)} and the uniform convexity condition (UC). Then for every $\tau\in\mathcal{T}$ there exists an $\mathbb{S}^n$-valued, $\mathcal{F}_\tau$-measurable random variable $P(\tau)$ such that
\[
V(\tau,\xi)=\langle P(\tau)\xi,\xi\rangle \qquad \forall\,\xi\in L^2_{\mathcal{F}_\tau}(\Omega;\mathbb{R}^n).
\]
Moreover, $P(\tau)$ is essentially bounded, and $P(T)=G$ a.s.
\end{prop}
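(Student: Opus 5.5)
Fix $\tau\in\mathcal{T}$. The plan is to write the cost as a quadratic functional in $(\xi,u)$, minimize over $u$ using (UC), and obtain $V(\tau,\cdot)$ as a conditional quadratic form whose kernel we can identify.

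\textbf{Quadratic decomposition.} By linearity of \eqref{eq:1.1}, $X^{\tau,\xi;u}=X^{\tau,\xi;0}+X^{\tau,0;u}$. Expanding $L$ gives
\[
J(\tau,\xi;u)=J(\tau,\xi;0)+2\,\mathbb{E}\bigl[\langle\!\langle \mathcal{K}\xi,u\rangle\!\rangle_\tau\mid\mathcal{F}_\tau\bigr]+J(\tau,0;u).
\]
Here the cross term is linear in $\xi$ and in $u$. Using Lemma~\ref{lem:2.1} together with the boundedness in \textbf{(AS1)}--\textbf{(AS2)}, $J(\tau,0;\cdot)$ is a bounded quadratic form on $\mathcal{U}[\tau,T]$; denote its self-adjoint operator by $\mathcal{M}_2$. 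The cross term can be written $[[\mathcal{M}_1\xi,u]]$ with $\mathcal{M}_1$ bounded. Applying (UC) at the stopping time $\tau$ gives $\mathcal{M}_2\ge\delta I$. This uses the conditional form: UC for deterministic $t$ extends to $\tau$ by approximating $\tau$ with stopping times taking finitely many values and by localizing with indicators $1_A$, $A\in\mathcal{F}_\tau$. Consequently the minimizer $u^*=-\mathcal{M}_2^{-1}\mathcal{M}_1\xi$ exists, is unique, and is linear in $\xi$. Since the problem is conditional, we take the minimizer in the $\mathcal{F}_\tau$-localized sense: for $u^*$ one has $J(\tau,\xi;u)-J(\tau,\xi;u^*)=J(\tau,0;u-u^*)\ge0$ a.s. This follows from the first-order condition holding after multiplying by arbitrary $1_A$, $A\in\mathcal{F}_\tau$. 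Hence the essential infimum is attained: $V(\tau,\xi)=J(\tau,\xi;u^*)$.

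\textbf{Extracting the kernel.} The map $\xi\mapsto u^*(\xi)$ is linear and commutes with multiplication by bounded $\mathcal{F}_\tau$-measurable scalars, so the same holds for $X^*$. Define, for the basis vectors $e_i$, $P_{ij}(\tau):=\tfrac12\bigl(V(\tau,e_i+e_j)-V(\tau,e_i)-V(\tau,e_j)\bigr)$; this is the polarization argument of \cite[Lemma~3.2]{ZhangDongMeng2020}. The resulting matrix is symmetric and $\mathcal{F}_\tau$-measurable. The $\mathcal{F}_\tau$-homogeneity of $u^*$ gives $V(\tau,\sum\xi_ie_i)=\sum\xi_i\xi_jP_{ij}(\tau)$, first for simple $\xi$ and then for all $\xi\in L^2_{\mathcal{F}_\tau}$ by $L^1$-continuity of $V$ in $\xi$. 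The continuity follows from the bound $|J(\tau,\xi;u^*)|\le K\,\mathbb{E}[|\xi|^2\mid\mathcal{F}_\tau]$.

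\textbf{Boundedness, the terminal condition, and the main obstacle.} Boundedness of $P(\tau)$ comes from two one-sided bounds. The upper bound uses $u=0$: $\langle P\xi,\xi\rangle\le J(\tau,\xi;0)\le K|\xi|^2$, by the conditional moment estimate for $\Phi(s)\Psi(\tau)\xi$ from Lemma~\ref{lem:2.2}. The lower bound uses $J(\tau,\xi;u)\ge J(\tau,\xi;0)-K|\xi|\,\|u\|_\tau+\delta\|u\|_\tau^2\ge-K'|\xi|^2$ conditionally, after completing the square. Finally, $P(T)=G$ holds because $\mathcal{U}[T,T]$ is trivial and $J(T,\xi;u)=\langle G\xi,\xi\rangle$.

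The main obstacle is making the conditional estimates uniform in $\omega$, i.e. obtaining constants $K$ that do not depend on $\tau$ or $\omega$. I would handle this by working with conditional expectations given $\mathcal{F}_\tau$ throughout, replacing $\xi$ by $\xi1_A$, and using the conditional versions of Lemma~\ref{lem:2.1}, which hold since all coefficients are bounded.
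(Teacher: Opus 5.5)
Your proposal is correct and follows the route the paper indicates. The paper only cites the polarization argument of \cite[Lemma~3.2]{ZhangDongMeng2020} together with (UC) for solvability; you fill this in by using (UC), extended to $\tau$ and localized on $\mathcal{F}_\tau$, to get a unique minimizer linear in $\xi$, then polarization for the kernel, two one-sided estimates for boundedness, and $P(T)=G$ trivially. One small point: your upper bound need not invoke Lemma~\ref{lem:2.2}, which requires Assumption~\ref{assum_nonsingular}, not a hypothesis of this proposition; the conditional form of Lemma~\ref{lem:2.1} applied to $X^{\tau,\xi;0}$ already gives $J(\tau,\xi;0)\le K|\xi|^2$.
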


\subsection{Dynamic Programming Principle}
For any stopping times $\tau\le\sigma\le T$ and any admissible control $u\in\mathcal{U}[\tau,T]$, let $X^{\tau,\xi;u}(\cdot)$ be the solution of the state equation \eqref{eq:1.1}. The following DPP is standard; its proof is identical to that in the positive definite case (see \cite[Theorem~3.3]{ZhangDongMeng2020}) because it relies only on the quadratic structure and the fact that the essential infimum is attained.

\begin{thm}[Dynamic Programming Principle]\label{thm:DPP}
Under the assumptions of Proposition~\ref{prop:quadratic}, the following hold.
\begin{enumerate}
\item For any $\tau\in\mathcal{T}$, $\sigma\in\mathcal{T}_\tau$ and $\xi\in L^2_{\mathcal{F}_\tau}(\Omega;\mathbb{R}^n)$,
\[
V(\tau,\xi)=\mathop{\mathrm{ess\,inf}}_{u\in\mathcal{U}[\tau,T]}\mathbb{E}^{\mathcal{F}_\tau}\left[\int_\tau^\sigma \ell(s,X^{\tau,\xi;u}(s),u(s))\,ds+V\bigl(\sigma,X^{\tau,\xi;u}(\sigma)\bigr)\right],
\]
where $\ell(t,x,u)=\langle Q(t)x,x\rangle+\langle R(t)u,u\rangle+2\langle S(t)x,u\rangle$.
\item For any $x\in\mathbb{R}^n$ and $u\in\mathcal{U}[\tau,T]$, the family
\[
\mathcal{J}^{\tau,x,u}(\sigma):=V\bigl(\sigma,X^{\tau,x;u}(\sigma)\bigr)+\int_\tau^\sigma \ell(s,X^{\tau,x;u}(s),u(s))\,ds,\qquad \sigma\in\mathcal{T}_\tau,
\]
is a $\mathcal{T}$-submartingale system. If $u$ is optimal for $(\tau,x)$, then $\mathcal{J}^{\tau,x,u}$ is a $\mathcal{T}$-martingale system.
\end{enumerate}
\end{thm}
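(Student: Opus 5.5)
The statement to prove is Theorem~\ref{thm:DPP}. The plan has three steps: establish the DPP identity in (1) through the two usual inequalities, then derive the submartingale and martingale properties in (2) directly from (1). Throughout, Proposition~\ref{prop:quadratic} supplies the quadratic form $V(\sigma,\eta)=\langle P(\sigma)\eta,\eta\rangle$ with $P(\sigma)$ essentially bounded. Uniform convexity (UC), together with the Hilbert-space structure of $\mathcal{U}[\sigma,T]$, gives existence of an optimal control at every pair $(\sigma,\eta)$ with $\eta\in L^2_{\mathcal{F}_\sigma}$. The reason is that $u\mapsto \mathbb{E}J(\sigma,\eta;u)$ is a continuous quadratic functional whose quadratic part is coercive by (UC), so a minimizer exists. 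The minimizer of the expected cost also minimizes the conditional cost, by the standard pasting (lattice) argument on $\mathcal{F}_\sigma$-measurable sets.

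For the inequality ``$\le$'' in (1), I fix $u\in\mathcal{U}[\tau,T]$ and set $\eta=X^{\tau,\xi;u}(\sigma)$. Let $v^*$ be optimal for $(\sigma,\eta)$, and concatenate to get $\tilde u=u\mathbf 1_{[\tau,\sigma]}+v^*\mathbf 1_{(\sigma,T]}$, which is predictable and square integrable. By pathwise uniqueness of \eqref{eq:1.1} (Lemma~\ref{lem:2.1}), the flow property $X^{\tau,\xi;\tilde u}(s)=X^{\sigma,\eta;v^*}(s)$ holds for $s\ge\sigma$. The tower property then gives
\[
V(\tau,\xi)\le J(\tau,\xi;\tilde u)=\mathbb{E}^{\mathcal{F}_\tau}\Big[\int_\tau^\sigma\ell\,ds+J(\sigma,\eta;v^*)\Big]=\mathbb{E}^{\mathcal{F}_\tau}\Big[\int_\tau^\sigma\ell\,ds+V(\sigma,\eta)\Big].
\]
Taking the essential infimum over $u$ yields ``$\le$''. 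For ``$\ge$'', I take any $u$ and note $J(\tau,\xi;u)=\mathbb{E}^{\mathcal{F}_\tau}[\int_\tau^\sigma\ell\,ds+J(\sigma,X(\sigma);u|_{[\sigma,T]})]$. Since $J(\sigma,X(\sigma);u|_{[\sigma,T]})\ge V(\sigma,X(\sigma))$ almost surely, this is at least the right-hand side of (1), and I then take the essential infimum over $u$. The quadratic-growth bound $|V(\sigma,\eta)|\le K|\eta|^2$, together with Lemma~\ref{lem:2.1}, guarantees that all conditional expectations are integrable.

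For (2), I apply (1) between two stopping times $\sigma\le\rho$ starting at $(\sigma,X^{\tau,x;u}(\sigma))$, using the given $u$ restricted to $[\sigma,T]$. The ``$\ge$'' half of that argument gives
\[
V(\sigma,X(\sigma))\le\mathbb{E}^{\mathcal{F}_\sigma}\Big[\int_\sigma^\rho\ell\,ds+V(\rho,X(\rho))\Big],
\]
and adding $\int_\tau^\sigma\ell\,ds$ to both sides yields $\mathcal{J}(\sigma)\le\mathbb{E}^{\mathcal{F}_\sigma}\mathcal{J}(\rho)$. This is exactly the $\mathcal{T}$-submartingale system property; the consistency $\mathcal{J}(\sigma)=\mathcal{J}(\sigma')$ on $\{\sigma=\sigma'\}$ is immediate. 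If $u$ is optimal at $(\tau,x)$, then its restriction to $[\sigma,T]$ is optimal at $(\sigma,X(\sigma))$. This follows from a contradiction argument: otherwise, on a set of positive measure in $\mathcal{F}_\sigma$, I could paste in a better control and strictly lower $\mathbb{E}J(\tau,x;u)$. Consequently $\mathbb{E}\mathcal{J}(T)=\mathbb{E}\mathcal{J}(\tau)=\mathbb{E}\mathcal{J}(\sigma)$, and a submartingale system with constant expectation is a martingale system.

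I expect the main obstacle to be the measurability and pasting step: attainment of the conditional essential infimum by a single control, and the concatenation at a random time $\sigma$. Both must respect predictability, and the jump term at $\sigma$ must be handled correctly. Because $u$ enters only through $\tilde\mu(ds,de)$ on $(\sigma,T]$ and the state uses $X(s-)$, the pasted control $u\mathbf 1_{[\![\tau,\sigma]\!]}+v^*\mathbf 1_{]\!]\sigma,T]\!]}$ is predictable. I will verify the flow identity directly from the integral form of \eqref{eq:1.1} rather than invoke Markov arguments, since the coefficients are random.
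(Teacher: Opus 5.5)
Your proposal is correct and is essentially the argument the paper relies on. The paper gives no independent proof but defers to \cite[Theorem~3.3]{ZhangDongMeng2020}, noting that only the quadratic form of $V$ (Proposition~\ref{prop:quadratic}) and attainment of the essential infimum under (UC) are needed, and those are exactly your ingredients (concatenation with an optimal continuation, the definition of the essential infimum, quadratic growth for integrability, and pasting for conditional minimality).

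There are a few small fixes. The inequality $V(\sigma,X(\sigma))\le\mathbb{E}^{\mathcal{F}_\sigma}[\int_\sigma^\rho\ell\,ds+V(\rho,X(\rho))]$ in (2) comes from the concatenation (``$\le$'') half of (1), not the ``$\ge$'' half. Since (UC) is only posited at deterministic $t$, you should obtain coercivity at a stopping time $\sigma$ by extending $u\in\mathcal{U}[\sigma,T]$ by zero on $[0,\sigma]$ and invoking (UC) at $t=0$. Finally, the restriction-optimality step is dispensable: for optimal $u$, $\mathbb{E}\mathcal{J}(\tau)\le\mathbb{E}\mathcal{J}(\sigma)\le\mathbb{E}\mathcal{J}(T)=\mathbb{E}\mathcal{J}(\tau)$ already gives constant expectation.
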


\subsection{Semimartingale Property of the Value Kernel}
Using the DPP and the Doob-Meyer decomposition, one extracts the semimartingale structure of $P(\cdot)$. The main difficulty, when jumps are present, is that the flow inverse may not exist globally on $[0,T]$. Following \cite{ZhangDongMeng2020}, we work on each stochastic interval between two consecutive jump times of the Poisson process, where the continuous part of the state equation is invertible, and then piece together the decomposition. The result is the following theorem, whose proof is exactly as in \cite[Theorem~3.5]{ZhangDongMeng2020} (see also \cite[Theorem~5.6]{SunXiongYong2021} for the continuous case).

\begin{thm}\label{thm:semimartingale_P}
Under the assumptions of Proposition~\ref{prop:quadratic}, the process $P=\{P(t);0\le t\le T\}$ admits an RCLL modification which is a semimartingale. Consequently, there exist $\mathbb{S}^n$-valued adapted processes $\Psi_P$, $\Lambda$, and $\Xi$ such that
\begin{equation}\label{eq:P_semimartingale}
dP(t)=\Psi_P(t)dt+\Lambda(t)dW(t)+\int_{\mathbb{Z}}\Xi(t,e)\,\tilde{\mu}(dt,de),
\end{equation}
with the integrability conditions: for any $p\ge1$,
\[
\mathbb{E}\Bigl[\Bigl(\int_0^T|\Psi_P(s)|ds\Bigr)^p\Bigr]<\infty,\quad
\mathbb{E}\Bigl[\Bigl(\int_0^T|\Lambda(s)|^2ds\Bigr)^p\Bigr]<\infty,\quad
\mathbb{E}\Bigl[\Bigl(\int_0^T\int_{\mathbb{Z}}|\Xi(s,e)|^2\nu(de)ds\Bigr)^p\Bigr]<\infty.
\]
Moreover, $P$ is essentially bounded.
\end{thm}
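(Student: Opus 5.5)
We will prove Theorem~\ref{thm:semimartingale_P} by sandwiching $P$ between the base flow $\Phi$ and its inverse $\Psi$ of Lemma~\ref{lem:2.2}, so that $P$ is expressed through quantities built from the zero-control system. Fix $x\in\mathbb{R}^n$ and let $u^{*}=u^{*}_{0,x}$ be the optimal control at $(0,x)$, which exists by (UC) through Proposition~\ref{prop:quadratic}. Write $X^{*}=X^{0,x;u^*}$. By Theorem~\ref{thm:DPP}(2), the process
\[
M^x(t):=\langle P(t)X^*(t),X^*(t)\rangle+\int_0^t\ell(s,X^*(s),u^*(s))\,ds
\]
is a martingale, in the sense of a $\mathcal{T}$-martingale system, closed by $L(0,x;u^*)\in L^1$. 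We take its RCLL version. By linearity of optimal controls in the initial state (again from (UC)), $u^*_{0,x}=\sum_i x_iu^*_{0,e_i}$. Polarization then shows that the matrix process $\mathcal{M}(t):=\mathbb{X}^*(t)^\top P(t)\mathbb{X}^*(t)$ is a semimartingale, where $\mathbb{X}^*$ is the matrix optimal state with columns $X^{0,e_i;u^*}$. We do not want to invert $\mathbb{X}^*$, which is exactly the obstruction in the jump case.

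\textbf{Step 1: avoid inverting $\mathbb{X}^*$.} We use Lemma~\ref{lem:2.3}: $\mathbb{X}^*(t)=\Phi(t)\big[I+\Theta(t)\big]$, where $\Theta(t)$ collects the stochastic integrals in \eqref{eq:2.8} with $u^*$. This splits $\mathbb{X}^*$ as $\Phi(t)$, which is invertible with semimartingale inverse $\Psi$ by Assumption~\ref{assum_nonsingular}, times a semimartingale $I+\Theta$. The factor $I+\Theta$ is invertible only up to a stopping time. We therefore work locally. Starting at any stopping time $\tau$ and using optimal controls at $(\tau,e_i)$, we get $\mathbb{X}^*(\tau)=I$. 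Define
\[
\sigma:=\inf\{s\ge\tau:|\mathbb{X}^*(s)-I|\ge 1/2\}\wedge T .
\]
On $[\tau,\sigma)$ the matrix $\mathbb{X}^*$ is invertible with bounded inverse. Its inverse is a semimartingale, because matrix inversion is smooth on that set. At the jump time $\sigma$ itself, the post-jump value is $(I+E)X^*(\sigma-)+Fu^*(\sigma)$, and we use $\Psi$ together with Assumption~\ref{assum_nonsingular} to control the left limit. Itô's formula for $\mathbb{X}^{*-\top}\mathcal{M}\mathbb{X}^{*-1}$ on $[\tau,\sigma)$ then identifies $P$ as a semimartingale there, and $P(\sigma)$ is obtained from the DPP value at $\sigma$.

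\textbf{Step 2: patch the local decompositions.} Iterating from $\tau_0=0$ gives stopping times $\tau_k$. The key claim is that $\tau_k=T$ for large $k$ a.s. Since $\nu(\mathbb{Z})<\infty$, there are only finitely many Poisson jumps on $[0,T]$. Between jumps, the paths of the $\mathbb{X}^*$ are continuous, and a uniform modulus-of-continuity argument prevents accumulation of the $\tau_k$. The decompositions on the pieces agree on overlaps by uniqueness of the canonical semimartingale decomposition. This yields \eqref{eq:P_semimartingale}, with $\Psi_P,\Lambda,\Xi$ predictable and $\mathbb{S}^n$-valued by symmetry of $P$.

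\textbf{Step 3: integrability.} Boundedness of $P$ follows from Proposition~\ref{prop:quadratic}: the bound $|P|\le K$ uses (UC) from below and the zero control from above. For the moment bounds, we apply Itô's formula to $|P|^2$ and obtain
\[
\int|\Lambda|^2ds+\iint|\Xi|^2\nu\,ds\le C+2\int\langle P,\Psi_P\rangle\,ds-\text{martingale terms}.
\]
We need an a priori pathwise $L^1$ bound on $\Psi_P$. To get one, we identify the drift in terms of $P,\Lambda,\Xi$ and bounded coefficients, namely as a Riccati-type quadratic expression. This uses the martingale property of $M^x$ together with the submartingale property for arbitrary $u$. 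It gives $|\Psi_P|\le C(1+|\Lambda|+\int|\Xi|\nu)$ on the set where optimality is used. Combining this with the Burkholder--Davis--Gundy inequality and the energy inequality yields all $p$-th moments, since $P$ is bounded.

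\textbf{Main obstacle.} We expect the hardest part to be Step 2: proving that the stopping times $\tau_k$ do not accumulate before $T$ and handling the jump times where $\mathbb{X}^*$ may lose invertibility. Our plan is to reset the optimal flow at every Poisson jump time, so that jumps never occur strictly inside a piece. Inside each piece we use only continuity, and across a jump we use Assumption~\ref{assum_nonsingular} via $\Psi$.
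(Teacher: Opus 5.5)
Your overall architecture is the right one: polarize the martingales $M^x$ to see that $\mathbb{X}^{*\top}P\,\mathbb{X}^*$ is a semimartingale, invert $\mathbb{X}^*$ locally, then patch. But Step~2 has a genuine gap.

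Each $\tau_{k+1}$ is the exit time from $\{|\,\cdot-I|<1/2\}$ of a \emph{different} process $\mathbb{X}^{*,k}$. That process is restarted at $(\tau_k,I)$ and driven by its own optimal controls, and (UC) tells you only that those controls are square-integrable. Continuity of each path separately gives $\tau_{k+1}>\tau_k$, but it gives no lower bound on $\tau_{k+1}-\tau_k$. There is no modulus of continuity uniform in $k$, so $\tau_k\uparrow\tau_\infty<T$ with positive probability is not excluded. Finiteness of the number of Poisson jumps does not help, because the accumulation can occur inside a single inter-jump interval.

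Nor can you simply restart at $\tau_\infty$. A bounded, even continuous and deterministic, function can have finite variation on every $[0,1-1/k]$ but not on $[0,1]$, e.g.\ $(1-t)\sin\frac{1}{1-t}$. So being a semimartingale on every $[0,\tau_k]$ does not give it on $[0,\tau_\infty]$ without control of the drift and martingale parts that is uniform in $k$. Your factorization $\mathbb{X}^*=\Phi(I+\Theta)$ does not help either, since $I+\Theta$ has exactly the same invertibility problem as $\mathbb{X}^*$.

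The paper defers this proof to Zhang--Dong--Meng (Theorem~3.5 there) and avoids exit times altogether. It cuts $[0,T]$ only at the a.s.\ finitely many jump times of $p$. On each inter-jump interval, where the dynamics are continuous, it relies on invertibility on the \emph{whole} interval, in the spirit of the continuous-case argument of Sun--Xiong--Yong. That whole-interval invertibility, or alternatively a priori bounds uniform over your pieces, is the missing ingredient.

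Step~3 also fails as written. The drift you would identify is $-(\mathscr{H}-\mathscr{M}\mathscr{N}^{-1}\mathscr{M}^\top)$. Since $\mathscr{M}$ is linear in $(\Lambda,\Xi)$, this grows quadratically, not linearly, in $(\Lambda,\Xi)$. Moreover, boundedness of $\mathscr{N}^{-1}$ is not available yet: the paper obtains $\mathscr{N}\ge\delta I_m$ only in Lemma~\ref{lem:6.1}, using the very integrability claimed here. With a quadratic bound, the inequality from It\^o's formula for $|P|^2$ does not close.

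What works is a one-sided bound that uses only the zero control.
\begin{itemize}
\item Along $X^{0,x;0}(s)=\Phi(s)x$, with $\Phi(s)$ invertible by Assumption~\ref{assum_nonsingular}, the submartingale property gives $\Psi_P+\mathscr{H}\ge 0$ as matrices, where $\mathscr{H}$ is linear in $(\Lambda,\Xi)$.
\item Hence $\int_0^T|\Psi_P+\mathscr{H}|\,ds\le\int_0^T\operatorname{tr}(\Psi_P+\mathscr{H})\,ds$, and the right side is controlled by $\operatorname{tr}(G-P(0))$, a stochastic integral, and $\int_0^T|\mathscr{H}|\,ds$.
\item Feeding this into It\^o's formula for $|P|^2$, together with BDG, Young's inequality and the bound $|\Xi|\le 2\|P\|_\infty$ (valid $\nu(de)\,ds\,d\mathbb{P}$-a.e.), yields all the moment bounds.
\end{itemize}
Estimates of this kind, established uniformly over your pieces, are also exactly what you would need to pass through an accumulation point of the $\tau_k$.
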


\section{Solvability of the Generalized Riccati Equation with Jumps}
\label{sec:SREJ}

We now prove that the triple \((P,\Lambda,\Xi)\) extracted from the value process satisfies a generalized stochastic Riccati equation with jumps (SREJ). The central novelty is establishing the uniform positive definiteness of the associated matrix \(\mathscr{N}(t)\)---a result that does not follow from~\cite{ZhangDongMeng2020}, which assumes positive definiteness of the cost matrices.

\subsection{Definition of the SREJ}
Define the following processes:
\[
\begin{aligned}
\mathscr{N}(t) &:= R(t)+D(t)^\top P(t-)D(t)+\int_{\mathbb{Z}} F(t,e)^\top\bigl(P(t-)+\Xi(t,e)\bigr)F(t,e)\,\nu(de),\\
\mathscr{M}(t) &:= P(t-)B(t)+\Lambda(t)D(t)+C(t)^\top P(t-)D(t)\\
&\qquad +\int_{\mathbb{Z}}\Bigl[E(t,e)^\top P(t-)F(t,e)+(I+E(t,e))^\top\Xi(t,e)F(t,e)\Bigr]\,\nu(de)+S(t)^\top,\\
\mathscr{H}(t) &:= A(t)^\top P(t-)+P(t-)A(t)+\Lambda(t)C(t)+C(t)^\top\Lambda(t)+C(t)^\top P(t-)C(t)\\
&\qquad +\int_{\mathbb{Z}}\Bigl[\Xi(t,e)E(t,e)+E(t,e)^\top\Xi(t,e)+E(t,e)^\top P(t-)E(t,e)\\
&\qquad\qquad +E(t,e)^\top\Xi(t,e)E(t,e)\Bigr]\,\nu(de)+Q(t).
\end{aligned}
\]

The generalized stochastic Riccati equation with jumps (SREJ) is given by
\begin{equation}\label{eq:SREJ}
\left\{
\begin{aligned}
dP(t) &= -\Bigl[\mathscr{H}(t)-\mathscr{M}(t)\mathscr{N}(t)^{-1}\mathscr{M}(t)^\top\Bigr]dt \\
&\quad +\Lambda(t)dW(t)+\int_{\mathbb{Z}}\Xi(t,e)\,\tilde{\mu}(dt,de),\\
P(T)&=G.
\end{aligned}
\right.
\end{equation}

\subsection{Uniform Positivity of \texorpdfstring{$\mathscr{N}$}{N} and Derivation of the SREJ}
We now prove that under the uniform convexity condition, $\mathscr{N}(t)$ is uniformly positive definite almost surely for almost every $t$. This is the crucial step that distinguishes our indefinite setting from the positive definite case treated in \cite{ZhangDongMeng2020}.

\begin{lem}\label{lem:6.1}
Let \textbf{(AS1)--(AS2)}, the uniform convexity condition (UC), and Assumption \ref{assum_nonsingular} hold. Then the essentially bounded adapted process $P(\cdot)$ obtained from Theorem~\ref{thm:semimartingale_P} and its martingale representation coefficients $\Lambda$, $\Xi$ satisfy the generalized SREJ \eqref{eq:SREJ}. Moreover,
\begin{equation}\label{eq:N_positive}
\mathscr{N}(t) \ge \delta I_m > 0 \qquad \text{a.e. on } [0,T],\; \text{a.s.}
\end{equation}
\end{lem}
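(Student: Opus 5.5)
The plan has two parts. First, we derive the local optimality structure of $P$ from the submartingale characterization in Theorem~\ref{thm:DPP}. Second, we prove $\mathscr{N}\ge\delta I_m$ by a needle-type perturbation argument that uses (UC) at every initial time. The Riccati equation \eqref{eq:SREJ} then follows by identifying the drift $\Psi_P$.

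\textbf{Step 1: drift of $\mathcal{J}$.} Fix $\tau$, $x\in\mathbb{R}^n$ and $u\in\mathcal{U}[\tau,T]$. Apply the It\^o formula for jump semimartingales to $\langle P(s)X(s),X(s)\rangle$, using \eqref{eq:P_semimartingale} for $P$ and \eqref{eq:1.1} for $X=X^{\tau,x;u}$. We collect the $ds$-terms, including the compensators of the jump cross terms $\langle \Xi(s,e)(E X+Fu),X\rangle$ and $\langle (P(s-)+\Xi)(EX+Fu),EX+Fu\rangle$. A direct computation shows that the drift of $\mathcal{J}^{\tau,x,u}$ equals
\[
\langle(\Psi_P+\mathscr{H})X,X\rangle+2\langle \mathscr{M}^\top X,u\rangle+\langle\mathscr{N}u,u\rangle,
\]
with $\mathscr{N},\mathscr{M},\mathscr{H}$ exactly as defined above. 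Note that the $(I+E)^\top\Xi F$ term in $\mathscr{M}$ arises from combining $\Xi F$ with $E^\top\Xi F$. The integrability in Theorem~\ref{thm:semimartingale_P} makes the local martingale parts true martingales after localization. By Theorem~\ref{thm:DPP}(2), this drift is $\ge0$ $ds\otimes d\mathbb{P}$-a.e. for every admissible $(x,u)$. The drift vanishes along the optimal pair, which exists for every initial datum by (UC) through Proposition~\ref{prop:quadratic}.

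\textbf{Step 2: uniform positivity of $\mathscr{N}$ (main obstacle).} Nonnegativity alone gives only $\mathscr{N}\ge0$ (set $X$ small and $u$ large locally), which is not the uniform bound we need. To obtain $\delta$, I will rerun the computation of Step 1 for the zero initial state. Take $t$, $\xi=0$, and a control $u=v\mathbf{1}_{[t,t+h]}$ with $v$ bounded and $\mathcal{F}_t$-measurable. By Proposition~\ref{prop:quadratic} and the DPP, $J(t,0;u)=\mathbb{E}[\int_t^{t+h}\ell\,ds+\langle P(t+h)X(t+h),X(t+h)\rangle\mid\mathcal F_t]$. Applying It\^o as in Step 1 on $[t,t+h]$ yields
\[
J(t,0;u)=\mathbb{E}\Bigl[\int_t^{t+h}\bigl(\langle\mathscr{N}v,v\rangle+2\langle\mathscr{M}^\top X,v\rangle+\langle(\Psi_P+\mathscr{H})X,X\rangle\bigr)ds\Bigm|\mathcal F_t\Bigr].
\]
The standard SDEP estimates give $\mathbb{E}|X(s)|^2\le K(s-t)|v|^2$. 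Combined with the integrability of $\Psi_P,\Lambda,\Xi$ in every $L^p$, Hölder's inequality makes the last two terms $o(h)$ relative to $h|v|^2$. (UC) then gives
\[
\mathbb{E}\int_t^{t+h}\langle(\mathscr{N}-\delta I)v,v\rangle ds\ge -o(h).
\]
Dividing by $h$ and letting $h\downarrow0$ along Lebesgue points, with $v$ ranging over a countable dense set multiplied by indicators of $\mathcal F_t$-sets, gives $\mathscr{N}(t)\ge\delta I_m$ a.e., a.s. The delicate points are the control of the $o(h)$ terms, since $\Psi_P$ and $\Lambda$ are only integrable rather than bounded, and the passage from $\mathcal F_t$-measurable $v$ to the predictable process $\mathscr{N}$ via left limits $P(t-)$. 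If the Lebesgue-point argument proves problematic, the fallback is to work with $\int\mathbb{E}\langle(\mathscr{N}-\delta I)v(s),v(s)\rangle ds\ge0$ for all simple predictable $v$ and conclude by a density argument. Assumption~\ref{assum_nonsingular} enters only through Lemma~\ref{lem:2.2}, which ensures that the state representations used in the estimates are valid.

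\textbf{Step 3: Riccati equation.} Since $\mathscr{N}$ is invertible, we complete the square:
\[
\langle\mathscr{N}u,u\rangle+2\langle\mathscr{M}^\top X,u\rangle=|\mathscr{N}^{1/2}(u+\mathscr{N}^{-1}\mathscr{M}^\top X)|^2-\langle\mathscr{M}\mathscr{N}^{-1}\mathscr{M}^\top X,X\rangle.
\]
Minimizing over $u$ in Step 1 gives $\Psi_P+\mathscr{H}-\mathscr{M}\mathscr{N}^{-1}\mathscr{M}^\top\ge0$. To get the reverse inequality, we use the optimal pair from arbitrary $(\tau,x)$, along which the drift is zero. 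This yields $\langle(\Psi_P+\mathscr{H}-\mathscr{M}\mathscr{N}^{-1}\mathscr{M}^\top)X^*,X^*\rangle\le0$. To conclude equality for all directions, we need $X^*(s)$ to range over enough directions. For this I would take $x=e_i$ at $\tau$ and use small-time continuity $X^*(s)\to e_i$ as $s\downarrow\tau$, together with the Lebesgue-point argument of Step 2; this avoids any invertibility of the optimal flow. Hence $\Psi_P=-(\mathscr{H}-\mathscr{M}\mathscr{N}^{-1}\mathscr{M}^\top)$. Combined with $P(T)=G$ from Proposition~\ref{prop:quadratic}, this gives \eqref{eq:SREJ}.
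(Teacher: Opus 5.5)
Your Step 2 has a genuine gap, and it is the step that is supposed to produce the constant $\delta$. You take $u=v\mathbf{1}_{[t,t+h]}$, so $u\equiv0$ on $(t+h,T]$. For this control the identity $J(t,0;u)=\mathbb{E}[\int_t^{t+h}\ell\,ds+\langle P(t+h)X(t+h),X(t+h)\rangle\mid\mathcal F_t]$ does not follow from Proposition~\ref{prop:quadratic} and the DPP. What is true is $J(t,0;u)=\mathbb{E}[\int_t^{t+h}\ell\,ds+J(t+h,X(t+h);0)\mid\mathcal F_t]\ge\mathbb{E}[\int_t^{t+h}\ell\,ds+V(t+h,X(t+h))\mid\mathcal F_t]$. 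Equality holds only if the zero control happens to be optimal from $(t+h,X(t+h))$.

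This inequality points the wrong way. (UC) bounds $J(t,0;u)$ from below, but you need a lower bound on the drift integral, which is the smaller quantity. Nor is the discrepancy negligible. It equals $\mathbb{E}[\langle(P^0(t+h)-P(t+h))X(t+h),X(t+h)\rangle\mid\mathcal F_t]$, where $P^0\ge P$ is the kernel of the zero-control cost-to-go. This is in general of the same order $h\,\mathbb{E}|v|^2$ as the main term, not $o(h)$. Heuristically, your computation carried out correctly only gives $\mathscr{N}^0\ge\delta I_m$, where $\mathscr{N}^0$ is built from $P^0$ and its martingale coefficients instead of $P$. Since $P\le P^0$, this is weaker than the claim.

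The missing idea is to continue \emph{optimally} after the spike. Set $u^h=v\mathbf{1}_{[t,t+h]}+\tilde u\,\mathbf{1}_{(t+h,T]}$, where $\tilde u$ is the optimal control for the pair $(t+h,X(t+h))$; it exists under (UC). The martingale half of Theorem~\ref{thm:DPP}, which you already invoke in Step 1, then makes $J(t,0;u^h)$ equal to the drift integral over $[t,t+h]$ exactly. Meanwhile (UC) applied to all of $u^h$ gives $J(t,0;u^h)\ge\delta\,\mathbb{E}\int_t^T|u^h|^2ds\ge\delta h\,\mathbb{E}|v|^2$, since the continuation energy only helps. This is the paper's Step 4.

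Two smaller points. First, with $v=\mathbf{1}_A\eta$ and $A\in\mathcal F_t$, your exceptional Lebesgue null set depends on $t$ through $A$. The paper avoids this by differentiating $s\mapsto\mathscr{N}(s)$ as an $L^1(\Omega)$-valued Bochner integrable function, which gives one null set for all test directions. Second, in your Step 3, Step 1 only gives nonnegativity of the drift at states actually visited. So ``minimizing over $u$'' yields $\Psi_P+\mathscr{H}-\mathscr{M}\mathscr{N}^{-1}\mathscr{M}^\top\ge0$ in all directions only after an extra argument. You could use the same small-time device you use for the reverse inequality, e.g.\ with $u(s)=-\mathscr{N}(s)^{-1}\mathscr{M}(s)^\top y$. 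Alternatively, use the paper's inverse-flow change of variables, which is where its proof of this lemma invokes Assumption~\ref{assum_nonsingular}.
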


\begin{proof}
The proof proceeds in several steps.

\textbf{Step 1. Submartingale property and the drift expression.}
For any $(\tau,\xi)\in\mathcal{D}$ and $u\in\mathcal{U}[\tau,T]$, denote by $X$ the corresponding state process. Define
\[
\mathcal{J}(s) := \langle P(s)X(s),X(s)\rangle + \int_\tau^s \ell(r,X(r),u(r))dr, \quad s\in[\tau,T],
\]
where $\ell(t,x,u)=\langle Q(t)x,x\rangle+\langle R(t)u,u\rangle+2\langle S(t)x,u\rangle$.
By Theorem \ref{thm:DPP}, $\mathcal{J}$ is an $\mathbb{F}$-submartingale.

Apply It\^o's formula to $\langle P(s)X(s),X(s)\rangle$. Using the semimartingale decomposition
\[
dP(t) = \Psi_P(t)dt + \Lambda(t)dW(t) + \int_{\mathbb{Z}} \Xi(t,e)\,\tilde{\mu}(de,dt)
\]
and the state dynamics \eqref{eq:1.1}, we obtain after standard calculations (see \cite[Formula~(4.2)]{ZhangDongMeng2020}) the drift rate of $\mathcal{J}$:
\begin{equation}\label{eq:Psi_correct}
\Psi(s,X(s-),u(s)) = \big\langle\big(\Psi_P(s)+\mathscr{H}(s)\big)X(s-),X(s-)\big\rangle + 2\langle\mathscr{M}(s)u(s),X(s-)\rangle + \langle\mathscr{N}(s)u(s),u(s)\rangle,
\end{equation}
where $\mathscr{H},\mathscr{M},\mathscr{N}$ are defined as in Subsection~\ref{sec:SREJ}.
Because $\mathcal{J}$ is a submartingale, its drift must be nonnegative almost everywhere:
\begin{equation}\label{eq:Psige0}
\Psi(s,X(s-),u(s)) \ge 0 \quad \text{a.e. } s\in[\tau,T],\ \text{a.s.}
\end{equation}

\textbf{Step 2. Pointwise nonnegativity via flow inversion.}
Fix $t\in[0,T)$ and a constant control $u(s)\equiv v\in\mathbb{R}^m$. For any deterministic $x\in\mathbb{R}^n$, let $X^{t,x;v}$ be the state process starting from $(t,x)$ with control identically equal to $v$. By Lemma~\ref{lem:2.2}, $X^{t,x;v}(s)=\Phi(s)\Phi(t)^{-1}x+Z^{t;v}(s)$, where $\Phi$ is the fundamental matrix and $Z$ collects terms not depending on the initial state. Because $\Phi(s)\Phi(t)^{-1}$ is invertible for every $s$ (Assumption~\ref{assum_nonsingular}), the map $x\mapsto X^{t,x;v}(s)$ is a bijection from $\mathbb{R}^n$ onto itself. Its inverse is precisely the inverse flow $Y^{t,\cdot;v}(s)$ introduced in Lemma~\ref{lem:2.3}. Inequality \eqref{eq:Psige0} reads $\Psi(s, X^{t,x;v}(s), v)\ge0$ for a.e. $s$, a.s., and for all $x\in\mathbb{R}^n$. For any nonnegative bounded predictable field $\eta(s,\omega,y)$, we therefore have
\[
\mathbb{E}\int_t^T\int_{\mathbb{R}^n} \eta(s, X^{t,x;v}(s))\, \Psi(s, X^{t,x;v}(s), v)\,dx\,ds \ge 0.
\]
Changing variables $y = X^{t,x;v}(s)$ with $x = Y^{t,y;v}(s)$ and $dx = |\det(\Phi(t)\Phi(s)^{-1})|\,dy>0$ gives
\[
\mathbb{E}\int_t^T\int_{\mathbb{R}^n} \eta(s, y)\, \Psi(s, y, v)\, |\det(\Phi(t)\Phi(s)^{-1})|\,dy\,ds \ge 0.
\]
By the arbitrariness of $\eta$ and a standard localization argument, we obtain the pointwise inequality
\begin{equation}\label{eq:pointwise}
\Psi(s, y, v) \ge 0, \qquad \forall\, y\in\mathbb{R}^n,\; \forall\, v\in\mathbb{R}^m,\; \text{a.e. } s\in[0,T],\ \text{a.s.}
\end{equation}
In particular, taking $y=0$ yields $\langle\mathscr{N}(s)v,v\rangle \ge 0$ for all $v$, so
\[
\mathscr{N}(s) \ge 0 \quad \text{a.e., a.s.}
\]

\textbf{Step 3. Optimal control forces the drift to vanish.}
By the uniform convexity condition and the quadratic structure (Proposition~\ref{prop:quadratic}), for every $(\tau,\xi)\in\mathcal{D}$ Problem (SLQ) admits a unique optimal control $u^*$. Fix $t\in[0,T)$ and an arbitrary deterministic $x\in\mathbb{R}^n$. Let $u^{t,x}$ be the unique optimal control for the initial pair $(t,x)$ and let $X^{t,x}$ denote the corresponding optimal state. By the dynamic programming principle (Theorem \ref{thm:DPP}), the process $\mathcal{J}$ becomes a \textbf{martingale} along $(X^{t,x},u^{t,x})$. Hence its drift vanishes almost everywhere:
\begin{equation}\label{eq:opt_drift_zero}
\Psi(s,X^{t,x}(s),u^{t,x}(s)) = 0 \quad \text{a.e. } s\in[t,T],\ \text{a.s.}
\end{equation}
In particular, at time $s=t$, we have $\Psi(t,x,u^{t,x}(t)) = 0$ (since $X^{t,x}(t-)=x$ and $u^{t,x}(t)$ is the control value at $t$). Combining with \eqref{eq:pointwise}, we see that the deterministic quadratic function $v\mapsto \Psi(t,x,v)$ attains its minimum (which is $0$) at $v = u^{t,x}(t)$, and because $\mathscr{N}(t)\ge 0$, the minimizer is unique if the quadratic coefficient matrix is positive definite (which will be proved in Step 4).

\textbf{Step 4. Strict positivity of $\mathscr{N}(t)$ via a spike variation argument.}
We now show that $\mathscr{N}(t)$ is uniformly positive definite. By the uniform convexity condition (UC), for any $u\in\mathcal{U}[t_0,T]$ with $X(t_0)=0$, we have
\[
J(t_0,0;u) \ge \delta \mathbb{E}\int_{t_0}^T |u(s)|^2 ds.
\]

Let $v \in L^\infty_{\mathcal{F}_{t_0}}(\Omega; \mathbb{R}^m)$ be an arbitrary bounded random variable. For any sufficiently small $\epsilon > 0$, let $X^v$ be the state process on $[t_0, t_0+\epsilon]$ generated by the constant control $v$ with $X^v(t_0) = 0$. By the dynamic programming principle (Theorem~\ref{thm:DPP}) and the existence of an optimal control (guaranteed by the uniform convexity condition), there exists a unique optimal control $\tilde{u}^*$ on $(t_0+\epsilon, T]$ for the initial pair $(t_0+\epsilon, X^v(t_0+\epsilon))$. Define a composite admissible control $u^\epsilon$ on $[t_0,T]$ as:
\[
u^\epsilon(s) = v \mathbf{1}_{[t_0, t_0+\epsilon]}(s) + \tilde{u}^*(s) \mathbf{1}_{(t_0+\epsilon, T]}(s).
\]
Let $X^\epsilon$ be the corresponding state process. By construction, on $(t_0+\epsilon, T]$, $X^\epsilon$ follows the optimal trajectory.

Using standard estimates for SDEPs (Lemma~\ref{lem:2.1} and the BDG inequality for jumps), there exists a constant $C>0$ (depending only on the bounds of the coefficients and $T$) such that
\begin{align}
\mathbb{E}\Bigl[\sup_{s\in[t_0,t_0+\epsilon]} |X^\epsilon(s)|^2\Bigr] &\le C\epsilon\,\mathbb{E}|v|^2, \label{E1}\\
\mathbb{E}\Bigl[\sup_{s\in[t_0,t_0+\epsilon]} |X^\epsilon(s)|^4\Bigr] &\le C\epsilon\,\mathbb{E}|v|^4. \label{E2}
\end{align}
Consequently,
\begin{align}
\mathbb{E}\int_{t_0}^{t_0+\epsilon} |X^\epsilon(s)|^2 ds &\le C\epsilon^2\,\mathbb{E}|v|^2, \qquad
\mathbb{E}\int_{t_0}^{t_0+\epsilon} |X^\epsilon(s)|^4 ds \le C\epsilon^2\,\mathbb{E}|v|^4. \label{E3}
\end{align}

Evaluating the cost integral inequality along $(X^\epsilon, u^\epsilon)$ yields:
\[
\mathbb{E}\int_{t_0}^{t_0+\epsilon} \Psi(s, X^\epsilon(s), v) ds + \mathbb{E}\int_{t_0+\epsilon}^T \Psi(s, X^\epsilon(s), \tilde{u}^*(s)) ds \ge \delta \mathbb{E}\int_{t_0}^{t_0+\epsilon} |v|^2 ds.
\]
By the optimality of $\tilde{u}^*$ and the martingale property \eqref{eq:opt_drift_zero}, we have $\Psi(s,X^\epsilon(s),\tilde{u}^*(s))=0$ for a.e. $s\in(t_0+\epsilon,T]$. Hence the second integral equals zero. Thus,
\[
\mathbb{E}\int_{t_0}^{t_0+\epsilon} \Big( \big\langle(\Psi_P(s)+\mathscr{H}(s))X^\epsilon(s), X^\epsilon(s)\big\rangle + 2\langle\mathscr{M}(s)v, X^\epsilon(s)\rangle + \langle\mathscr{N}(s)v, v\rangle \Big) ds \ge \delta \epsilon\, \mathbb{E}|v|^2. \tag{5}
\]

\textbf{Step 5. Vanishing of cross and state terms as $\epsilon\to0$.}
Using Cauchy-Schwarz and (E3):
\[
\begin{aligned}
&\bigl|\mathbb{E}\int_{t_0}^{t_0+\epsilon} 2\langle \mathscr{M}(s)v, X^\epsilon(s)\rangle ds\bigr| \\
&\le 2\|v\|_\infty \mathbb{E}\int_{t_0}^{t_0+\epsilon} |\mathscr{M}(s)||X^\epsilon(s)| ds \\
&\le 2\|v\|_\infty \Bigl( \mathbb{E}\int_{t_0}^{t_0+\epsilon} |\mathscr{M}(s)|^2 ds \Bigr)^{1/2} \Bigl( \mathbb{E}\int_{t_0}^{t_0+\epsilon} |X^\epsilon(s)|^2 ds \Bigr)^{1/2} \\
&\le 2\|v\|_\infty \Bigl( \mathbb{E}\int_{t_0}^{t_0+\epsilon} |\mathscr{M}(s)|^2 ds \Bigr)^{1/2} \sqrt{C}\,\epsilon\, \sqrt{\mathbb{E}|v|^2}.
\end{aligned}
\]
Because $\mathbb{E}\int_0^T |\mathscr{M}(s)|^2 ds < \infty$ (as $\mathscr{M}$ is a combination of bounded coefficients and square-integrable processes $P,\Lambda,\Xi$), the dominated convergence theorem implies $\mathbb{E}\int_{t_0}^{t_0+\epsilon} |\mathscr{M}(s)|^2 ds \to 0$ as $\epsilon\to0$. Hence the cross term is $o(\epsilon)$.

For the state term, note that $\Psi_P$ and $\mathscr{H}$ are both integrable (by Theorem~\ref{thm:semimartingale_P} and boundedness of coefficients). Decompose $\Psi_P+\mathscr{H} = (\Psi_P+\mathscr{H}_{\text{bdd}}) + \mathscr{H}_{\Lambda} + \mathscr{H}_{\Xi}$ where
\[
\mathscr{H}_{\text{bdd}} = A^\top P + PA + C^\top P C + Q + \int_{\mathbb{Z}} E^\top P E\,\nu(de)
\]
is essentially bounded, and $\mathscr{H}_{\Lambda} = \Lambda C + C^\top\Lambda$, $\mathscr{H}_{\Xi} = \int_{\mathbb{Z}}(\Xi E+E^\top\Xi+E^\top\Xi E)\nu(de)$. Using (E3) and the integrability of $\Lambda,\Xi$, one shows as in the continuous case that
\[
\mathbb{E}\int_{t_0}^{t_0+\epsilon} \big\langle(\Psi_P(s)+\mathscr{H}(s))X^\epsilon(s), X^\epsilon(s)\big\rangle ds = o(\epsilon). \tag{6}
\]
(The detailed estimate is identical to that in the proof of~\cite[Theorem~4.1]{ZhangDongMeng2020} after replacing $\mathscr{H}$ by $\Psi_P+\mathscr{H}$, and relies on the fact that $\mathbb{E}\int_{t_0}^{t_0+\epsilon}|\Psi_P(s)|ds = o(1)$ and the $L^2$--$L^4$ bounds for $X^\epsilon$.)

Now divide (5) by $\epsilon$ and let $\epsilon\to0$. The contributions of the cross term and the state term vanish (as shown above). To handle the remaining term rigorously without the measure-theoretic circularity mentioned in Remark~\ref{rmk:LDT}, we proceed via Bochner integration.

First, note that $\mathscr{N}\in L^1([0,T];L^1(\Omega;\mathbb{S}^m))$ because
\[
\int_0^T\mathbb{E}|\mathscr{N}(s)|ds\le T\Bigl(\|R\|_\infty+\|D\|_\infty^2\|P\|_\infty+\|F\|_\infty^2\nu(\mathbb{Z})(\|P\|_\infty+\mathbb{E}\|\Xi\|_{L^{\nu,2}})\Bigr)<\infty.
\]
By the Lebesgue differentiation theorem for Bochner-integrable functions (see, e.g., \cite[Chapter~V, \S1]{Yosida1980}), there exists a single Lebesgue-null set $\mathcal{N}\subset[0,T]$ such that for every $t_0\in[0,T]\setminus\mathcal{N}$,
\begin{equation}\label{eq:LDT_Bochner}
\lim_{\epsilon\to0}\frac{1}{\epsilon}\int_{t_0}^{t_0+\epsilon}\mathbb{E}\bigl|\mathscr{N}(s)-\mathscr{N}(t_0)\bigr|ds=0.
\end{equation}

Now fix an arbitrary $t_0\in[0,T]\setminus\mathcal{N}$. Take a test function of the form $v=\mathbf{1}_A\eta$, where $A\in\mathcal{F}_{t_0}$ and $\eta\in\mathbb{R}^m$ is a deterministic constant vector. Since $v$ is $\mathcal{F}_{t_0}$-measurable and bounded, it is admissible in the spike variation argument. From (5), after dividing by $\epsilon$ and using the fact that the cross and state terms are $o(1)$, we obtain
\[
\frac{1}{\epsilon}\,\mathbb{E}\int_{t_0}^{t_0+\epsilon}\langle\mathscr{N}(s)v,v\rangle ds\ge\delta\,\mathbb{E}|v|^2+o(1)\qquad(\epsilon\to0).
\]
Rewrite the left-hand side as
\[
\mathbb{E}\Bigl\langle\frac{1}{\epsilon}\int_{t_0}^{t_0+\epsilon}\mathscr{N}(s)ds\,v,\;v\Bigr\rangle
=\mathbb{E}\Bigl\langle\frac{1}{\epsilon}\int_{t_0}^{t_0+\epsilon}\bigl(\mathscr{N}(s)-\mathscr{N}(t_0)\bigr)ds\,v,\;v\Bigr\rangle
+\mathbb{E}\langle\mathscr{N}(t_0)v,v\rangle.
\]
By the Bochner LDT~\eqref{eq:LDT_Bochner}, the first term on the right is bounded in absolute value by
\[
\mathbb{E}|v|^2\cdot\frac{1}{\epsilon}\int_{t_0}^{t_0+\epsilon}\mathbb{E}|\mathscr{N}(s)-\mathscr{N}(t_0)|ds\;\longrightarrow\;0\qquad(\epsilon\to0).
\]
Letting $\epsilon\to0$ therefore yields
\[
\mathbb{E}\langle\mathscr{N}(t_0)v,v\rangle\ge\delta\,\mathbb{E}|v|^2.
\]
Substituting $v=\mathbf{1}_A\eta$ and using the $\mathcal{F}_{t_0}$-measurability of $\mathscr{N}(t_0)$,
\[
\mathbb{E}\bigl[\mathbf{1}_A\langle\mathscr{N}(t_0)\eta,\eta\rangle\bigr]\ge\delta\,\mathbb{E}[\mathbf{1}_A|\eta|^2],\qquad\forall\,A\in\mathcal{F}_{t_0},\;\forall\,\eta\in\mathbb{R}^m.
\]
Since $A\in\mathcal{F}_{t_0}$ is arbitrary, the definition of conditional expectation implies
\[
\langle\mathscr{N}(t_0)\eta,\eta\rangle\ge\delta|\eta|^2\quad\text{a.s.},\qquad\forall\,\eta\in\mathbb{R}^m.
\]
Choosing a countable dense subset of $\mathbb{R}^m$ and using continuity of the quadratic form, we conclude $\mathscr{N}(t_0)\ge\delta I_m$ almost surely. This holds for every $t_0\in[0,T]\setminus\mathcal{N}$, which is precisely
\begin{equation}\label{eq:N_pos}
\mathscr{N}(t) \ge \delta I_m > 0 \quad \text{a.e. } t\in[0,T],\ \text{a.s.}
\end{equation}

\textbf{Step 6. Identification of the drift and the SREJ.}
Now that we have established $\mathscr{N}(t)>0$, the quadratic function $v\mapsto \Psi(t,x,v)$ is strictly convex and attains its unique global minimum at the point where its gradient vanishes. From \eqref{eq:Psi_correct}, for fixed $(t,x)$ the minimizer is
\[
v^*(t,x) = -\mathscr{N}(t)^{-1}\mathscr{M}(t)^\top x.
\]
The minimum value is obtained by substituting $v^*$:
\[
\Psi(t,x,v^*) = \big\langle \big(\Psi_P(t) + \mathscr{H}(t) - \mathscr{M}(t)\mathscr{N}(t)^{-1}\mathscr{M}(t)^\top\big) x,\; x \big\rangle.
\]
On the other hand, from Step 3 we know that for the optimal control $u^{t,x}$ we have $\Psi(t,x,u^{t,x}(t)) = 0$, and because the minimizer is unique, $u^{t,x}(t)=v^*(t,x)$ and the minimum value must be zero for every $x$. Hence,
\[
\Psi_P(t) + \mathscr{H}(t) - \mathscr{M}(t)\mathscr{N}(t)^{-1}\mathscr{M}(t)^\top = 0 \quad \text{a.e. } t\in[0,T],\ \text{a.s.}
\]
Therefore,
\[
\Psi_P(t) = -\bigl( \mathscr{H}(t) - \mathscr{M}(t)\mathscr{N}(t)^{-1}\mathscr{M}(t)^\top \bigr).
\]
Inserting this expression for $\Psi_P$ into the semimartingale decomposition of $P$ yields exactly the SREJ \eqref{eq:SREJ} with the terminal condition $P(T)=G$ (from the definition of the value function). The uniform positivity $\mathscr{N}(t)\ge\delta I_m$ has already been proved in Step 5. This completes the proof.
\end{proof}

\begin{rmk}\label{rmk:LDT}
A naive application of the Lebesgue differentiation theorem to the scalar function $s\mapsto\mathbb{E}\langle\mathscr{N}(s)v,v\rangle$ would be invalid here, because the test variable $v$ is chosen from $L^\infty_{\mathcal{F}_{t_0}}$, which depends on the point $t_0$ at which the limit is taken. Consequently, the null set on which LDT fails would also depend on $t_0$, creating a measure-theoretic circularity. The Bochner-integral argument above avoids this by extracting a \emph{single} null set $\mathcal{N}$ that works for all test directions simultaneously.
\end{rmk}

\subsection{Existence and Uniqueness of the SREJ Solution}

The previous lemma shows that the triple $(P,\Lambda,\Xi)$ extracted from the value function solves the SREJ. The following theorem asserts that this solution is unique in a suitable class.

\begin{thm}\label{thm:existence_uniqueness_SREJ}
Under the assumptions of Lemma~\ref{lem:6.1}, the SREJ \eqref{eq:SREJ} admits a unique adapted solution
\[
(P,\Lambda,\Xi) \in L_{\mathbb{F}}^{\infty}(0,T;\mathbb{S}^n)\times L_{\mathbb{F}}^{2}(0,T;\mathbb{S}^{n\times d})\times L_{\mathbb{F}}^{\nu,2}(0,T;\mathbb{S}^n)
\]
satisfying $\mathscr{N}(t)\ge \delta I_m$ a.s. a.e. Moreover, this solution coincides with the value kernel obtained from the SLQ problem.
\end{thm}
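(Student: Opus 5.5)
Existence is already supplied by Lemma~\ref{lem:6.1}: the triple $(P,\Lambda,\Xi)$ obtained from Theorem~\ref{thm:semimartingale_P} solves \eqref{eq:SREJ} with $P(T)=G$ and $\mathscr{N}\ge\delta I_m$. By Proposition~\ref{prop:quadratic} and Theorem~\ref{thm:semimartingale_P}, $P$ is essentially bounded and $\Lambda,\Xi$ have the stated square integrability. The remaining task is uniqueness. I would prove it by a verification (completion-of-squares) argument: any solution in the stated class with $\mathscr{N}\ge\delta I_m$ must equal the value kernel. Since the value kernel is unique by Proposition~\ref{prop:quadratic}, any two solutions then coincide.

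\textbf{Step 1 (completion of squares).} Let $(\bar P,\bar\Lambda,\bar\Xi)$ be any solution in the class, and let $\bar{\mathscr N},\bar{\mathscr M},\bar{\mathscr H}$ be the associated coefficients, with $\bar{\mathscr N}\ge\delta I_m$. Fix $(t,\xi)\in\mathcal D$ and $u\in\mathcal U[t,T]$, and apply It\^o's formula to $\langle \bar P(s)X(s),X(s)\rangle$ on $[t,T]$. The drift is given by \eqref{eq:Psi_correct} with $\Psi_P$ replaced by $-(\bar{\mathscr H}-\bar{\mathscr M}\bar{\mathscr N}^{-1}\bar{\mathscr M}^\top)$. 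Adding $\ell$ and completing the square in $u$, this drift becomes
\[
\bigl\langle \bar{\mathscr N}(s)\bigl(u+\bar{\mathscr N}^{-1}\bar{\mathscr M}^\top X(s-)\bigr),\,u+\bar{\mathscr N}^{-1}\bar{\mathscr M}^\top X(s-)\bigr\rangle .
\]
\textbf{Step 2 (taking conditional expectations).} The stochastic integrals are only local martingales, because $\bar\Lambda,\bar\Xi$ are merely square integrable while $X$ is unbounded. I would localize with stopping times $\tau_k\uparrow T$ that make the $dW$ and $\tilde\mu$ integrals true martingales, take $\mathbb E[\cdot\mid\mathcal F_t]$, and pass $k\to\infty$. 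On the left, $\bar P$ is bounded and $X\in L^2_{\mathbb F}(\Omega;C([t,T]))$ (Lemma~\ref{lem:2.1}), so dominated convergence applies to $\langle\bar P X,X\rangle$. On the right, the integrand is nonnegative since $\bar{\mathscr N}\ge\delta I_m$, so monotone convergence applies. This gives
\[
J(t,\xi;u)=\langle\bar P(t)\xi,\xi\rangle+\mathbb E\Bigl[\int_t^T\bigl|\bar{\mathscr N}^{1/2}(u+\bar{\mathscr N}^{-1}\bar{\mathscr M}^\top X)\bigr|^2ds\Bigm|\mathcal F_t\Bigr]\ge\langle\bar P(t)\xi,\xi\rangle .
\]
Taking the essential infimum over $u$ yields $V(t,\xi)\ge\langle \bar P(t)\xi,\xi\rangle$.

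\textbf{Step 3 (reverse inequality).} Equality should hold along the feedback $u=-\bar{\mathscr N}^{-1}\bar{\mathscr M}^\top X$, so I need the closed-loop equation to have a solution with $u\in\mathcal U[t,T]$. This is the main obstacle. The gain $\bar{\mathscr N}^{-1}\bar{\mathscr M}^\top$ contains $\bar\Lambda D$ and $\int_{\mathbb Z}(I+E)^\top\bar\Xi F\,\nu(de)$, which are only square integrable and not bounded. The closed-loop SDEP therefore has unbounded coefficients, and Lemma~\ref{lem:2.1} would only apply if the gain lay in $L^\infty(\Omega;L^2)$.

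I would avoid solving this equation directly by a penalization argument. Step 2 shows that the map $u\mapsto J(t,\xi;u)-\langle\bar P(t)\xi,\xi\rangle$ equals the squared $\bar{\mathscr N}$-weighted distance of $u$ from the feedback law. Take the optimal control $u^*$, which exists by (UC) via Proposition~\ref{prop:quadratic}, and apply the identity with $\xi$ replaced by $\mathbf 1_A\xi$ for $A\in\mathcal F_t$. Comparing with the value-kernel identity obtained in the same way from Lemma~\ref{lem:6.1} for $(P,\Lambda,\Xi)$, for which the optimal pair satisfies $u^*=-\mathscr N^{-1}\mathscr M^\top X^*$ by Step 6 of that lemma, gives $\langle (P(t)-\bar P(t))\xi,\xi\rangle\ge 0$ together with the defect term.

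To close the gap, I would run the argument symmetrically: treat $P-\bar P$ as the solution of a linear BSDE with jumps, driven by the difference of the two Riccati drifts. Rewriting $\mathscr M\mathscr N^{-1}\mathscr M^\top-\bar{\mathscr M}\bar{\mathscr N}^{-1}\bar{\mathscr M}^\top$ through the optimal feedback of the value kernel then shows that $P-\bar P\le 0$ as well. Here I rely on the two $\mathscr N$'s being uniformly positive and on the moment bounds of Theorem~\ref{thm:semimartingale_P} to control the unbounded gains.

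\textbf{Step 4.} With $P=\bar P$ established, the martingale parts of the two equations coincide. Uniqueness of the canonical semimartingale decomposition and of the martingale representation then gives $\Lambda=\bar\Lambda$ and $\Xi=\bar\Xi$. This proves uniqueness and the identification with the value kernel.
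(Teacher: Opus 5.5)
Your existence part and Steps 1, 2 and 4 are sound. For an arbitrary solution $(\bar P,\bar\Lambda,\bar\Xi)$ with $\bar{\mathscr N}\ge\delta I_m$, localization, dominated convergence for the indefinite terms and monotone convergence for the completed square give $J(t,\xi;u)\ge\langle\bar P(t)\xi,\xi\rangle$ for every $u\in\mathcal U[t,T]$, hence $P\ge\bar P$. Once $P=\bar P$, the martingale parts coincide.

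The genuine gap is Step 3, the reverse inequality $P\le\bar P$, which is the whole content of uniqueness. Your penalization paragraph only reproduces Step 2: inserting the optimal pair $(u^*,X^*)$ into your identity gives
\[
\langle (P(t)-\bar P(t))\xi,\xi\rangle=\mathbb{E}\Bigl[\int_t^T\bigl|\bar{\mathscr N}^{1/2}\bigl(u^*+\bar{\mathscr N}^{-1}\bar{\mathscr M}^\top X^*\bigr)\bigr|^2ds\Bigm|\mathcal F_t\Bigr]\ge0,
\]
and what you need is that this defect vanishes. The symmetric linear-BSDE argument does not deliver that. Put $\Theta^*:=-\mathscr N^{-1}\mathscr M^\top$ and $\bar\Theta:=-\bar{\mathscr N}^{-1}\bar{\mathscr M}^\top$. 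The difference of the two Riccati drifts becomes a Lyapunov-type expression, linear in $(P-\bar P,\Lambda-\bar\Lambda,\Xi-\bar\Xi)$, plus a signed quadratic defect only after you fix a common gain, and the sign depends on that choice.

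If you linearize around $\Theta^*$, the relevant closed loop is $X^*$, which is square integrable, but the resulting representation is exactly the display above, again $\ge0$. To get $P-\bar P\le0$ you must linearize around $\bar\Theta$. Along the closed loop $\bar X$ generated by $\bar\Theta$ this yields
\[
d\bigl\langle (P-\bar P)\bar X,\bar X\bigr\rangle=\bigl|\mathscr N^{1/2}(\bar\Theta-\Theta^*)\bar X\bigr|^2ds+d(\text{local martingale}).
\]
Taking expectations here requires something like $\mathbb E\sup_s|\bar X(s)|^2<\infty$ (which would follow from $\bar\Theta\bar X\in\mathcal U[t,T]$). That is precisely the obstacle you identified, and you have not removed it.

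The moment bounds of Theorem~\ref{thm:semimartingale_P} do not help. They concern the value kernel's $(\Lambda,\Xi)$, while an arbitrary solution in the class is only known to satisfy $\bar\Lambda\in L^2_{\mathbb F}$ and $\bar\Xi\in L^{\nu,2}_{\mathbb F}$. Moreover, polynomial moments of $\int_t^T|\bar\Theta|^2ds$ would not control a linear SDE whose coefficients contain $\bar\Theta$; its second moment in general involves exponential moments of that integral.

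What is missing is an a priori estimate showing that the feedback of an \emph{arbitrary} solution is admissible, or an equivalent comparison principle for the linear BSDE with the unbounded gain $\bar\Theta$. (UC) does not give this directly. If you continue optimally after the localizing time $\tau_k$, the bound it yields for $\mathbb E\int_t^{\tau_k}|\bar\Theta\bar X|^2ds$ carries $\mathbb E\langle (P-\bar P)(\tau_k)\bar X(\tau_k),\bar X(\tau_k)\rangle\ge0$ on the right-hand side. This term is not controlled uniformly in $k$, and bounding it by $C\,\mathbb E|\bar X(\tau_k)|^2$ is circular.

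For comparison, the paper proves uniqueness by applying Theorem~\ref{thm:verification} to each solution. However, Step~1 of that theorem obtains admissibility of the feedback only by identifying the closed loop with the optimal pair via Step~6 of Lemma~\ref{lem:6.1}, which holds for the value kernel alone. So that route relies on exactly the fact your Step~3 lacks.

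You have located the real crux correctly. To finish, you must either prove $L^2$-stability of the closed loop driven by $\bar\Theta$, using structure of the SREJ beyond what you invoke, or restrict the uniqueness class to solutions whose feedback control lies in $\mathcal U[t,T]$. In the latter case your Steps~1--2, applied with $u=\bar\Theta\bar X$, immediately give $V(t,\xi)=\langle\bar P(t)\xi,\xi\rangle$ and hence $\bar P=P$.
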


\begin{proof}
\textbf{Existence.}
Existence is an immediate consequence of Lemma~\ref{lem:6.1}. Indeed, the lemma demonstrates that the process $P$ constructed from the value function via Theorem~\ref{thm:semimartingale_P} and the existence of optimal controls (Proposition~\ref{prop:quadratic}), together with its martingale representation coefficients $\Lambda,\Xi$, satisfies the SREJ \eqref{eq:SREJ} and the uniform positivity condition $\mathscr{N}(t)\ge\delta I_m$. Hence an adapted solution exists.

\textbf{Uniqueness.}
Now suppose $(P_1,\Lambda_1,\Xi_1)$ and $(P_2,\Lambda_2,\Xi_2)$ are two adapted solutions of \eqref{eq:SREJ} in the stated class, both satisfying $\mathscr{N}_i(t)\ge\delta I_m$ a.e. and $P_i(T)=G$.

We first show $P_1=P_2$ a.s. For any deterministic $x\in\mathbb{R}^n$ and any $t\in[0,T]$, consider the closed-loop SDE associated with each solution as in Theorem~\ref{thm:verification}. That theorem (the verification theorem) establishes that the control $u_i(s)=-\mathscr{N}_i(s)^{-1}\mathscr{M}_i(s)^\top X_i(s)$ is optimal for the initial pair $(t,x)$, and the corresponding value satisfies $V(t,x)=\langle P_i(t)x,x\rangle$. Since the value function $V(t,x)$ is uniquely determined by the SLQ problem (it is the essential infimum of the cost over admissible controls), we must have $\langle P_1(t)x,x\rangle = \langle P_2(t)x,x\rangle$ for all $x$, almost surely. Thus $P_1(t)=P_2(t)$ for every $t\in[0,T]$, a.s.

Set $P:=P_1=P_2$, $\Delta\Lambda:=\Lambda_1-\Lambda_2$ and $\Delta\Xi:=\Xi_1-\Xi_2$. Subtracting the two SREJ equations yields
\[
0 = dP-dP = -\big[(\mathscr{H}_1-\mathscr{M}_1\mathscr{N}_1^{-1}\mathscr{M}_1^\top)-(\mathscr{H}_2-\mathscr{M}_2\mathscr{N}_2^{-1}\mathscr{M}_2^\top)\big]dt + \Delta\Lambda\,dW + \int_{\mathbb{Z}}\Delta\Xi\,\tilde{\mu}(de,dt).
\]
Because $P$ is the same in both equations, the drift terms involve only $\Delta\Lambda$ and $\Delta\Xi$. A careful but straightforward computation (see \cite[Theorem~5.2]{ZhangDongMeng2020}) shows that the $dt$ term vanishes, leaving the identity
\[
\int_0^\cdot \Delta\Lambda(s)\,dW(s) + \int_0^\cdot\!\int_{\mathbb{Z}}\Delta\Xi(s,e)\,\tilde{\mu}(ds,de)\equiv0.
\]
Thus the local martingale on the left-hand side is identically zero. Computing its predictable quadratic variation (see, e.g., \cite{Protter2005,Situ2005}),
\[
0=\Bigl\langle\int_0^\cdot \Delta\Lambda\,dW + \iint \Delta\Xi\,\tilde{\mu}\Bigr\rangle_T
= \int_0^T|\Delta\Lambda(s)|^2ds+\int_0^T\!\!\int_{\mathbb{Z}}|\Delta\Xi(s,e)|^2\nu(de)\,ds.
\]
Taking expectations yields
\[
0 = \mathbb{E}\int_0^T\Big(|\Delta\Lambda(s)|^2 + \int_{\mathbb{Z}}|\Delta\Xi(s,e)|^2\nu(de)\Big)ds.
\]
Hence $\Delta\Lambda=0$ a.e. and $\Delta\Xi=0$ a.e. Therefore the solution is unique.

Finally, the coincidence with the value kernel follows from the construction in Lemma~\ref{lem:6.1} and the verification theorem (Theorem~\ref{thm:verification}), which jointly imply that the value function satisfies $V(t,\xi)=\langle P(t)\xi,\xi\rangle$.
\end{proof}

\section{Feedback Synthesis and Optimal Control}
\label{sec:feedback}

We now construct the optimal feedback control from the SREJ solution and prove its optimality via a completion-of-squares argument, with careful localization to handle the possibly unbounded state process.

\begin{lem}[Existence and uniqueness for SDE with jumps]\label{lem:jump_sde_general}
Assume that the vector functions
\[
f: \Omega \times [0,T] \times \mathbb{R}^n \to \mathbb{R}^n, \quad
g: \Omega \times [0,T] \times \mathbb{R}^n \to \mathbb{R}^{n\times d}, \quad
h: \Omega \times [0,T] \times \mathbb{R}^n \times \mathbb{Z} \to \mathbb{R}^n
\]
satisfy the following conditions:

\begin{enumerate}
    \item[(i)] For each $x\in \mathbb{R}^n$, $f(\cdot,\cdot,x)$, $g(\cdot,\cdot,x)$, and $h(\cdot,\cdot,x,\cdot)$ are $\mathbb{F}$-predictable, and
    \[
    \int_0^T |f(t,0)| dt < \infty, \quad
    \int_0^T |g(t,0)|^2 dt < \infty, \quad
    \int_0^T \int_{\mathbb{Z}} |h(t,0,e)|^2 \nu(de) dt < \infty, \quad \text{a.s.}
    \]

    \item[(ii)] There exist non-negative $\mathbb{F}$-adapted processes $\alpha_1, \alpha_2, \alpha_3$ such that
    \[
    \int_0^T \alpha_1(t) dt < \infty, \quad
    \int_0^T \alpha_2(t)^2 dt < \infty, \quad
    \int_0^T \int_{\mathbb{Z}} \alpha_3(t)^2 \nu(de) dt < \infty, \quad \text{a.s.},
    \]
    and, for all $x,y \in \mathbb{R}^n$,
    \[
    |f(t,x)-f(t,y)| \le \alpha_1(t)|x-y|, \quad
    |g(t,x)-g(t,y)| \le \alpha_2(t)|x-y|, \quad
    |h(t,x,e)-h(t,y,e)| \le \alpha_3(t)|x-y|, \quad \text{a.s.}
    \]
\end{enumerate}

Then, for any initial value $\xi \in L^2_{\mathcal{F}_0}(\Omega;\mathbb{R}^n)$, the SDE with jumps
\begin{equation}\label{eq:sde_jump_general}
dX_t = f(t,X_t) dt + g(t,X_t) dW_t + \int_{\mathbb{Z}} h(t,X_{t-},e) \tilde{\mu}(dt,de), \quad X_0 = \xi
\end{equation}
admits a unique strong solution~\cite{Galchuk1978}.
\end{lem}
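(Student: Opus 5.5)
Under the stated hypotheses the random Lipschitz coefficients $\alpha_i$ are only pathwise integrable, so the plan is to reduce to the bounded-Lipschitz case by localization and then patch the local solutions together.

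\textbf{Step 1: stopping times.} Introduce the stopping times
\[
\tau_N:=\inf\Bigl\{t\in[0,T]:\int_0^t\Bigl(\alpha_1(s)+\alpha_2(s)^2+\nu(\mathbb{Z})\alpha_3(s)^2+|f(s,0)|+|g(s,0)|^2+\int_{\mathbb{Z}}|h(s,0,e)|^2\nu(de)\Bigr)ds\ge N\Bigr\}\wedge T .
\]
These are stopping times because the integrands are adapted and the integral is continuous in $t$. By conditions (i)--(ii) the integral is a.s.\ finite on $[0,T]$, so $\tau_N\uparrow T$ and in fact $\tau_N=T$ for all large $N$, a.s.

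\textbf{Step 2: solve on $[0,\tau_N]$.} Replace $f,g,h$ by $f\mathbf{1}_{[0,\tau_N]}(t)$, etc. For the truncated problem, set up a Picard iteration in the space of adapted RCLL processes. Instead of the usual constant-Lipschitz norm, use the weighted norm
\[
\mathbb{E}\sup_{t}e^{-\beta K(t)}|X_t|^2,\qquad K(t)=\int_0^{t\wedge\tau_N}\bigl(\alpha_1+\alpha_2^2+\nu(\mathbb{Z})\alpha_3^2\bigr)ds\le N .
\]
Apply It\^o's formula to $e^{-\beta K(t)}|X_t-Y_t|^2$. The drift cross term is absorbed via $2\alpha_1|x-y|^2\le\alpha_1|x-y|^2+\alpha_1|x-y|^2$. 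The diffusion and jump quadratic terms are bounded by $\alpha_2^2|x-y|^2$ and $\nu(\mathbb{Z})\alpha_3^2|x-y|^2$, and choosing $\beta$ large makes these dominated by $-\beta\,dK$. BDG for the martingale parts (Brownian and compensated Poisson) then gives a contraction.

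Since $\xi$ need not be bounded, a cleaner alternative is to also localize $\xi$ via $\mathbf{1}_{\{|\xi|\le N\}}$, using $\mathcal F_0$-measurability. Alternatively, one can prove only pathwise uniqueness plus existence through the contraction on $L^2$ with the weight, noting that $|f(t,0)|$ contributes at most $N$ after Cauchy--Schwarz with the weighted integrals.

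This step yields a unique solution $X^N$ on $[0,\tau_N]$.

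\textbf{Step 3: consistency and pasting.} By uniqueness, $X^{N+1}=X^N$ on $[0,\tau_N]$. Define $X:=X^N$ on $[0,\tau_N]$; because $\tau_N=T$ eventually a.s., $X$ is defined on all of $[0,T]$. Stopped stochastic integrals commute with stopping, so $X$ solves the SDE \eqref{eq:sde_jump_general}.

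\textbf{Step 4: uniqueness.} If $X$ and $\tilde X$ are two solutions, apply the same weighted It\^o estimate to $X-\tilde X$ stopped at $\tau_N\wedge\sigma_M$, where $\sigma_M$ localizes $\sup|X|+\sup|\tilde X|$. Gronwall with the random clock $K$, via the exponential weight, gives $X=\tilde X$ on $[0,\tau_N\wedge\sigma_M]$. Letting $N,M\to\infty$ gives $X=\tilde X$.

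\textbf{Main obstacle.} The hard part is Step 2: the Lipschitz constants are random and only integrable in time, so the classical Gronwall argument fails. The plan handles this by the exponential random time-change weight $e^{-\beta K}$ combined with the localization $K\le N$. The finiteness $\nu(\mathbb{Z})<\infty$ is used so that the jump Lipschitz bound integrates to $\nu(\mathbb{Z})\alpha_3^2$. Alternatively, one could simply cite the general result of Gal'chuk~\cite{Galchuk1978}, which covers exactly this setting.
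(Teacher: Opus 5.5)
The paper gives no argument here beyond the citation of Gal'chuk, so your localization-plus-Picard scheme is a self-contained alternative. Its architecture is sound. $\tau_N$ is a stopping time with $\tau_N=T$ for all large $N$. The truncated Picard map preserves $\mathbb{E}\sup_t|X_t|^2<\infty$ without any localization of $\xi$, since all clocks are bounded by $N$ and $\mathbb{E}|\xi|^2<\infty$. The pasting in Step~3 is fine. Step~4 is correct: for the difference $D$ of two \emph{solutions} every Lipschitz term is a multiple of $|D|^2$, so the drift of $e^{-\beta K}|D|^2$ is $\le(2-\beta)e^{-\beta K}|D|^2\,dK\le0$ once $\beta\ge2$.

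The gap is the existence claim in Step~2. What you compute there (``$2\alpha_1|x-y|^2\le\alpha_1|x-y|^2+\alpha_1|x-y|^2$'', ``dominated by $-\beta\,dK$'') is precisely this two-solution estimate, not a contraction estimate for the Picard map $\Gamma$. Take $D=\Gamma(X)-\Gamma(Y)$, with $\delta f_s:=f(s,X_s)-f(s,Y_s)$ and $\delta g,\delta h$ defined likewise. It\^o's formula gives
\[
e^{-\beta K_t}|D_t|^2+\beta\int_0^t e^{-\beta K_s}|D_s|^2\,dK_s=\int_0^t e^{-\beta K_s}\Bigl(2\langle D_s,\delta f_s\rangle+|\delta g_s|^2+\int_{\mathbb{Z}}|\delta h_s(e)|^2\,\nu(de)\Bigr)ds+M_t .
\]
The terms $|\delta g|^2\le\alpha_2^2|X-Y|^2$ and $\int_{\mathbb{Z}}|\delta h|^2\nu(de)\le\nu(\mathbb{Z})\alpha_3^2|X-Y|^2$ involve the \emph{input} difference. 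The term $-\beta\,dK$ only controls the \emph{output} $|D|^2$. After BDG you obtain $\mathbb{E}\sup_t e^{-\beta K_t}|D_t|^2\le C\,\mathbb{E}\int_0^T e^{-\beta K_s}|X_s-Y_s|^2\,dK_s$ with $C$ independent of $\beta$. Bounding the right-hand side by your weighted sup-norm then only gives the factor $CN$, which is not small.

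The smallness that the weight does produce lives in the $dK$-weighted $L^2$ seminorm $\|X\|_\beta^2:=\mathbb{E}\int_0^T e^{-\beta K_s}|X_s|^2\,dK_s$. The measure must be $dK$, not $dt$ or a supremum, and that is where you should contract. Split $2\alpha_1|D||X-Y|\le\frac{\beta}{2}\alpha_1|D|^2+\frac{2}{\beta}\alpha_1|X-Y|^2$ and take expectations in the identity above. The local martingales are true martingales for iterates with $\mathbb{E}\sup_t|X_t|^2<\infty$ and clocks bounded by $N$. This yields $\|\Gamma X-\Gamma Y\|_\beta^2\le\frac{2}{\beta}\|X-Y\|_\beta^2$ for $\beta\ge2$, a contraction once $\beta>2$. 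The BDG bound then shows that the Picard iterates are Cauchy in $\mathbb{E}\sup_t|\cdot|^2$, since $e^{-\beta K}\ge e^{-\beta N}$.

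Alternatively, cut $[0,\tau_N]$ at $\sigma_j:=\inf\{t:K_t\ge j\varepsilon\}\wedge\tau_N$ into at most $N/\varepsilon+1$ pieces, and run an unweighted contraction with factor $O(\varepsilon)$ on each piece.

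With either repair the proof is complete, and it buys something over the bare citation. It only uses the integrated jump bound $\int_{\mathbb{Z}}|h(t,x,e)-h(t,y,e)|^2\nu(de)\le c(t)|x-y|^2$. That is the form in which the paper actually checks hypothesis (ii) in Theorem~\ref{thm:verification}.
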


\begin{thm}\label{thm:verification}
Assume that \textbf{(AS1)--(AS2)}, the uniform convexity condition (UC), and Assumption \ref{assum_nonsingular} hold. Let $(P,\Lambda,\Xi)$ be the adapted solution of the SREJ \eqref{eq:SREJ} obtained in Theorem~\ref{thm:existence_uniqueness_SREJ}. Then:

\begin{enumerate}
\item[(i)] For any initial pair $(t,\xi)\in[0,T)\times L_{\mathcal{F}_t}^2(\Omega;\mathbb{R}^n)$, define the feedback control
\[
u^*(s) := -\mathcal{Q}(s) X^*(s),\qquad
\mathcal{Q}(s) := \mathscr{N}(s)^{-1}\mathscr{M}(s)^\top,
\]
where $X^*$ is the unique strong solution of the closed-loop system
\[
\left\{
\begin{aligned}
dX^*(s) &= \big(A(s) - B(s)\mathcal{Q}(s)\big)X^*(s)\,ds \\
&\quad + \big(C(s) - D(s)\mathcal{Q}(s)\big)X^*(s)\,dW(s) \\
&\quad + \int_{\mathbb{Z}} \big(E(s,e) - F(s,e)\mathcal{Q}(s)\big)X^*(s-)\,\tilde\mu(ds,de), \quad s\in[t,T],\\
X^*(t) &= \xi.
\end{aligned}
\right.
\]
Then $u^*\in\mathcal{U}[t,T]$ and it is the unique optimal control for Problem (SLQ) at $(t,\xi)$. The feedback form $u^*(s)=-\mathcal{Q}(s)X^*(s-)$ is directly implementable once the SREJ solution $(P,\Lambda,\Xi)$ is obtained, which can be approximated numerically via discretization schemes for BSDEs with jumps (see, e.g., the discussion in Section~\ref{sec:conclusion}). Moreover, the value function satisfies
\[
V(t,\xi) = \langle P(t)\xi,\xi\rangle.
\]
\item[(ii)] The adapted solution $(P,\Lambda,\Xi)$ of the SREJ \eqref{eq:SREJ} is unique in the class
\[
L_{\mathbb{F}}^{\infty}(0,T;\mathbb{S}^n)\times L_{\mathbb{F}}^{2}(0,T;\mathbb{S}^{n\times d})\times L_{\mathbb{F}}^{\nu,2}(0,T;\mathbb{S}^n).
\]
\end{enumerate}
\end{thm}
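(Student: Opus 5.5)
The proof goes by completion of squares along the SREJ, with localization by stopping times. Uniqueness of the SREJ solution is then deduced from the uniqueness of the value function.

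\textbf{Step 1: well-posedness of the closed loop.} By Lemma~\ref{lem:6.1}, $\mathscr{N}\ge\delta I_m$, so $\mathscr{N}^{-1}$ is bounded by $\delta^{-1}$. Since $P$ is bounded and $B,D,F,S$ are bounded, $|\mathcal{Q}(s)|\le K\bigl(1+|\Lambda(s)|+\int_{\mathbb{Z}}|\Xi(s,e)|\nu(de)\bigr)$. Hence $\int_t^T|\mathcal{Q}|^2ds<\infty$ a.s. by Theorem~\ref{thm:semimartingale_P} and $\nu(\mathbb{Z})<\infty$. The closed-loop coefficients then satisfy the random Lipschitz conditions of Lemma~\ref{lem:jump_sde_general}, which gives a unique strong solution $X^*$. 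At this stage $u^*=-\mathcal{Q}X^*(s-)$ is only known to be locally square integrable; its membership in $\mathcal{U}[t,T]$ is deferred to Step 3.

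\textbf{Step 2: completion of squares.} Fix an arbitrary $u\in\mathcal{U}[t,T]$ with state $X$. Apply It\^o's formula to $\langle P(s)X(s),X(s)\rangle$ and use the drift identity \eqref{eq:Psi_correct} together with the SREJ, which gives $\Psi_P=-(\mathscr{H}-\mathscr{M}\mathscr{N}^{-1}\mathscr{M}^\top)$. This yields, for a localizing sequence of stopping times $\tau_k\uparrow T$ that make the stochastic integrals true martingales,
\[
\mathbb{E}\Bigl[\langle P(\tau_k)X(\tau_k),X(\tau_k)\rangle+\int_t^{\tau_k}\ell\,ds\Bigm|\mathcal{F}_t\Bigr]=\langle P(t)\xi,\xi\rangle+\mathbb{E}\Bigl[\int_t^{\tau_k}\bigl\langle\mathscr{N}(u+\mathcal{Q}X_-),u+\mathcal{Q}X_-\bigr\rangle ds\Bigm|\mathcal{F}_t\Bigr].
\]
For $u\in\mathcal{U}[t,T]$, $\sup|X|^2$ is integrable and $P$ is bounded, so we can let $k\to\infty$: dominated convergence handles the left side and monotone convergence the right side, since the integrand is nonnegative. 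This gives
\[
J(t,\xi;u)=\langle P(t)\xi,\xi\rangle+\mathbb{E}\Bigl[\int_t^T\langle\mathscr{N}(u+\mathcal{Q}X_-),u+\mathcal{Q}X_-\rangle ds\Bigm|\mathcal{F}_t\Bigr]\ge\langle P(t)\xi,\xi\rangle .
\]

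\textbf{Step 3: admissibility of $u^*$ (main obstacle).} The hard point is that $\mathcal{Q}$ is merely $L^2$ in time, so $u^*$ need not obviously be square integrable. The approach is to use that $P$ is the value kernel (Lemma~\ref{lem:6.1}). By (UC) and Proposition~\ref{prop:quadratic}, there is a unique optimal $\bar u\in\mathcal{U}[t,T]$ with state $\bar X$. Applying Step 2 to $\bar u$ gives $V(t,\xi)=\langle P(t)\xi,\xi\rangle$, and the square term must vanish. Since $\mathscr{N}\ge\delta I$, this forces $\bar u=-\mathcal{Q}\bar X_-$ a.e. Then $\bar X$ solves the closed-loop equation, so pathwise uniqueness from Step 1 gives $\bar X=X^*$ and $u^*=\bar u\in\mathcal{U}[t,T]$. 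Optimality follows, and uniqueness of the optimal control follows from the identity in Step 2 with $\mathscr{N}\ge\delta I$: any optimal $u$ must satisfy $u=-\mathcal{Q}X_-$, hence coincide with $u^*$. If this identification were unavailable, the fallback would be to estimate $X^*$ via localization and the uniform convexity bound $J(t,0;u)\ge\delta\|u\|^2$ applied on $[t,\tau_k]$, uniformly in $k$.

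\textbf{Step 4: part (ii).} Let $(\tilde P,\tilde\Lambda,\tilde\Xi)$ be another solution in the stated class with $\tilde{\mathscr N}\ge\delta I_m$. Steps 1--2 apply verbatim to it, giving $J(t,x;u)\ge\langle\tilde P(t)x,x\rangle$. For equality, the admissibility of $\tilde u^*$ cannot come from Step 3, since that used that $P$ is the value kernel. Instead we rerun Step 3 with the SLQ optimal control $\bar u$ and the square term for $\tilde P$. Writing $\langle\tilde P(t)x,x\rangle=J(t,x;\bar u)-\mathbb{E}[\int\langle\tilde{\mathscr N}(\bar u+\tilde{\mathcal Q}\bar X_-),\cdot\rangle|\mathcal{F}_t]$, one wants to show the square term is zero. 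The plan is to establish $V\ge\langle\tilde Px,x\rangle\ge V$ by approximating the closed-loop control of $\tilde P$ with truncated controls $\tilde u_k=\tilde u^*\mathbf 1_{[t,\tau_k]}$, continued optimally afterwards. The control of the tail terms $\langle\tilde P(\tau_k)X,X\rangle$ as $k\to\infty$ is the delicate part. Once $\tilde P=P$ holds (for all rational $t$ and then by right-continuity), the martingale parts coincide by uniqueness of the canonical semimartingale decomposition, exactly as in Theorem~\ref{thm:existence_uniqueness_SREJ}.
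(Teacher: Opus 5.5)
Part (i) is correct; part (ii) has a genuine gap. Your part (i) follows the paper's route: closed-loop well-posedness via Lemma~\ref{lem:jump_sde_general}, admissibility of $u^*$ by identifying the SLQ optimal pair with the closed-loop solution through pathwise uniqueness, and a localized completion of squares. Two details are handled better than in the paper. First, you get the feedback relation $\bar u=-\mathcal{Q}\bar X_-$ directly from the vanishing of the square term, instead of from the pointwise-in-time argument of Step~6 of Lemma~\ref{lem:6.1}. Second, you pass to the limit in the sign-indefinite running cost by dominated convergence. The paper's appeal to Fatou's lemma there presupposes a nonnegativity of $\ell$ that fails in the indefinite setting.

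The gap is in part (ii). Take a second solution $(\tilde P,\tilde\Lambda,\tilde\Xi)$ with $\tilde{\mathscr{N}}\ge\delta I_m$. Your Step~2 only gives $\langle\tilde P(t)x,x\rangle\le V(t,x)$, i.e.\ $\Delta:=P-\tilde P\ge0$. The reverse inequality needs the closed-loop control $\tilde u^*=-\tilde{\mathcal{Q}}\tilde X_-$ of $\tilde P$ to lie in $\mathcal{U}[t,T]$, or at least a bound on $\mathbb{E}\int_t^{\tau_k}|\tilde u^*|^2ds$ uniform in $k$. This is exactly the step you leave open, and the truncation plan cannot supply it by itself. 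Along $(\tilde X,\tilde u^*)$ the drift of $\langle\tilde P\tilde X,\tilde X\rangle+\int\ell\,ds$ vanishes. By Lemma~\ref{lem:6.1}, the drift of $\langle P\tilde X,\tilde X\rangle+\int\ell\,ds$ equals $\langle\mathscr{N}(\tilde u^*+\mathcal{Q}\tilde X_-),\tilde u^*+\mathcal{Q}\tilde X_-\rangle$. Hence your control $w_k$ (closed loop on $[t,\tau_k]$, optimal afterwards) satisfies
\[
J(t,x;w_k)=\langle\tilde P(t)x,x\rangle+\mathbb{E}\bigl[\langle\Delta(\tau_k)\tilde X(\tau_k),\tilde X(\tau_k)\rangle\bigm|\mathcal{F}_t\bigr],
\]
where
\[
\mathbb{E}\bigl[\langle\Delta(\tau_k)\tilde X(\tau_k),\tilde X(\tau_k)\rangle\bigm|\mathcal{F}_t\bigr]=\langle\Delta(t)x,x\rangle+\mathbb{E}\Bigl[\int_t^{\tau_k}\bigl\langle\mathscr{N}(\tilde u^*+\mathcal{Q}\tilde X_-),\tilde u^*+\mathcal{Q}\tilde X_-\bigr\rangle ds\Bigm|\mathcal{F}_t\Bigr].
\]
So the tail term is nondecreasing in $k$ and never below $\langle\Delta(t)x,x\rangle$. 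It can tend to $0$ only if $\Delta(t)x=0$ is already known, and $V(t,x)\le J(t,x;w_k)$ collapses to a tautology.

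What is missing is an independent $L^2$ bound on $\tilde u^*$ extracted from (UC). The fallback you mention at the end of Step~3 does not close in its naive form. Combine $\mathbb{E}J(t,x;w_k)\ge\frac{\delta}{2}\mathbb{E}\int_t^{\tau_k}|\tilde u^*|^2ds-K|x|^2$ with $\mathbb{E}J(t,x;w_k)\le K|x|^2+K\mathbb{E}|\tilde X(\tau_k)|^2$ and the state estimate $\mathbb{E}|\tilde X(\tau_k)|^2\le K\bigl(|x|^2+\mathbb{E}\int_t^{\tau_k}|\tilde u^*|^2ds\bigr)$. This yields nothing unless $K$ is small relative to $\delta$.

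You are right that the paper's own argument for (ii), which applies the verification theorem ``to each solution'', tacitly assumes this admissibility. Its Step~1 obtains it only because $P$ is the value kernel. But as written, your part (ii) identifies the obstacle rather than overcoming it.
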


\begin{proof}
We follow the method of \cite[Theorem~5.1]{ZhangDongMeng2020}, with careful handling of the indefinite setting using the uniform positivity of $\mathscr{N}$ established in Lemma~\ref{lem:6.1}. The proof is divided into three steps.

\underline{Step 1. Well-posedness of the closed-loop system and admissibility of $u^*$.}
From Lemma~\ref{lem:6.1}, $\mathscr{N}(t)\ge\delta I_m$ a.s.\ a.e., hence $\mathscr{N}(t)^{-1}$ is bounded by $1/\delta$. The process $\mathscr{M}$ involves the bounded process $P$, the square-integrable processes $\Lambda\in L^2_{\mathbb{F}}$ and $\Xi\in L^{\nu,2}_{\mathbb{F}}$, together with the bounded coefficients $B,C,D,E,F,S$; therefore $\mathscr{M}\in L^2_{\mathbb{F}}(t,T;\mathbb{R}^{m\times n})$. Since $\mathscr{N}^{-1}$ is bounded, $\mathcal{Q}:=\mathscr{N}^{-1}\mathscr{M}^\top$ also belongs to $L^2_{\mathbb{F}}(t,T;\mathbb{R}^{m\times n})$. Moreover, by Theorem~\ref{thm:semimartingale_P} the martingale parts $\Lambda,\Xi$ possess moments of all orders; consequently,
\begin{equation}\label{eq:mathcalQ_moments}
\mathbb{E}\Bigl[\Bigl(\int_t^T|\mathcal{Q}(s)|^2 ds\Bigr)^p\Bigr]<\infty\qquad\forall\,p\ge1.
\end{equation}

Define the closed-loop coefficients
\[
\widehat{A}:=A-B\mathcal{Q},\qquad \widehat{C}:=C-D\mathcal{Q},\qquad \widehat{E}:=E-F\mathcal{Q}.
\]
We verify that these coefficients satisfy the hypotheses of Lemma~\ref{lem:jump_sde_general}.
For the drift, using the boundedness of $A,B$ and the Cauchy--Schwarz inequality,
\[
\int_t^T|\widehat{A}(s)|\,ds\le C_A T+C_B\sqrt{T}\Bigl(\int_t^T|\mathcal{Q}(s)|^2ds\Bigr)^{1/2}<\infty\quad\text{a.s.},
\]
since $\mathcal{Q}\in L^2_{\mathbb{F}}(t,T)$. Similarly, for the diffusion coefficient,
\[
\int_t^T|\widehat{C}(s)|^2ds\le 2\|C\|_\infty^2 T+2\|D\|_\infty^2\int_t^T|\mathcal{Q}(s)|^2ds<\infty\quad\text{a.s.},
\]
and for the jump coefficient,
\[
\int_t^T\int_{\mathbb{Z}}|\widehat{E}(s,e)|^2\nu(de)ds\le 2\int_t^T\int_{\mathbb{Z}}|E|^2\nu(de)ds+2\|F\|_\infty^2\nu(\mathbb{Z})\int_t^T|\mathcal{Q}(s)|^2ds<\infty\quad\text{a.s.}
\]
The linear structure yields, for all $x,y\in\mathbb{R}^n$,
\[
|\widehat{A}(t)x-\widehat{A}(t)y|\le|\widehat{A}(t)||x-y|,\quad
|\widehat{C}(t)x-\widehat{C}(t)y|\le|\widehat{C}(t)||x-y|,
\]
\[
|\widehat{E}(t,e)x-\widehat{E}(t,e)y|\le|\widehat{E}(t,e)||x-y|.
\]
Thus condition (ii) of Lemma~\ref{lem:jump_sde_general} is satisfied with the adapted coefficients
\[
\alpha_1(t)=|\widehat{A}(t)|,\qquad \alpha_2(t)=|\widehat{C}(t)|,\qquad \alpha_3(t)=\Bigl(\int_{\mathbb{Z}}|\widehat{E}(t,e)|^2\nu(de)\Bigr)^{1/2},
\]
which meet the required integrability by the estimates above. Condition (i) holds because $\widehat{A}(t)0=0$, $\widehat{C}(t)0=0$, $\widehat{E}(t,e)0=0$. Lemma~\ref{lem:jump_sde_general} therefore guarantees a unique strong solution $X^*$ (in the pathwise sense) to the closed-loop system
\[
\begin{cases}
dX^*(s)=\widehat{A}(s)X^*(s)ds+\widehat{C}(s)X^*(s)dW(s)+\displaystyle\int_{\mathbb{Z}}\widehat{E}(s,e)X^*(s-)\,\tilde\mu(ds,de),\quad s\in[t,T],\\[4pt]
X^*(t)=\xi.
\end{cases}
\]

We now establish the admissibility of $u^*(\cdot):=-\mathcal{Q}(\cdot)X^*(\cdot-)$ and the square-integrability of $X^*$. Since $\mathcal{Q}$ is $\mathbb{F}$-predictable and $X^*$ is $\mathbb{F}$-adapted and c\`adl\`ag, $u^*$ is $\mathbb{F}$-predictable. By Lemma~\ref{lem:6.1} (Step~6), the unique optimal control $u^{\mathrm{opt}}$ for Problem (SLQ) at $(t,\xi)$---whose existence is guaranteed by the uniform convexity condition (UC) and Proposition~\ref{prop:quadratic}---satisfies the feedback relation
\[
u^{\mathrm{opt}}(s)=-\mathcal{Q}(s)X^{\mathrm{opt}}(s-)\quad\text{a.e. }s\in[t,T],\ \text{a.s.},
\]
where $X^{\mathrm{opt}}$ is the corresponding optimal state. Substituting this relation into the state equation \eqref{eq:1.1} shows that $X^{\mathrm{opt}}$ itself satisfies the closed-loop SDEP with coefficients $\widehat{A},\widehat{C},\widehat{E}$ and the same initial condition $X^{\mathrm{opt}}(t)=\xi$. By the uniqueness assertion of Lemma~\ref{lem:jump_sde_general}, we must have $X^*=X^{\mathrm{opt}}$ a.s., and consequently $u^*=-\mathcal{Q}X^*=-\mathcal{Q}X^{\mathrm{opt}}=u^{\mathrm{opt}}$. Since $X^{\mathrm{opt}}\in L_{\mathbb{F}}^2(\Omega;C([t,T];\mathbb{R}^n))$ by the well-posedness of Problem (SLQ) and $u^{\mathrm{opt}}\in\mathcal{U}[t,T]$ by definition, it follows that $X^*\in L_{\mathbb{F}}^2(\Omega;C([t,T];\mathbb{R}^n))$ and $u^*\in\mathcal{U}[t,T]$. Moreover, Lemma~\ref{lem:2.1} applied to the state equation \eqref{eq:1.1} with control $u^*$ provides the estimate
\[
\mathbb{E}^{\mathcal{F}_t}\sup_{t\le s\le T}|X^*(s)|^2\le C|\xi|^2.
\]

\underline{Step 2. Optimality of $u^*$.}
Take any admissible control $u\in\mathcal{U}[t,T]$ and let $X$ be the corresponding state process with $X(t)=\xi$. For each integer $j\ge1$, define the stopping time
\[
\gamma_j := T \wedge \inf\{ s\ge t : |X(s)| \ge j \},
\]
with $\inf\emptyset = T$. Then $\gamma_j\nearrow T$ a.s. and $\mathbb{P}\{\gamma_j=T\}\nearrow1$ as $j\to\infty$. Applying It\^o's formula to $\langle P(s)X(s),X(s)\rangle$ on $[t,\gamma_j]$ and using the SREJ \eqref{eq:SREJ}, we obtain
\[
\begin{aligned}
&\mathbb{E}^{\mathcal{F}_t}\langle P(\gamma_j)X(\gamma_j),X(\gamma_j)\rangle + \mathbb{E}^{\mathcal{F}_t}\int_t^{\gamma_j} \big( \langle QX,X\rangle + \langle Ru,u\rangle \big)ds \\
&= \langle P(t)\xi,\xi\rangle + \mathbb{E}^{\mathcal{F}_t}\int_t^{\gamma_j} \langle \mathscr{N}(s)(u(s)+\mathcal{Q}(s)X(s-)),\, u(s)+\mathcal{Q}(s)X(s-)\rangle ds \\
&\ge \langle P(t)\xi,\xi\rangle.
\end{aligned}
\]
(The algebraic cancellation is identical to that in the proof of~\cite[Theorem~5.1]{ZhangDongMeng2020}; the key is that $\mathscr{H}-\mathscr{M}\mathscr{N}^{-1}\mathscr{M}^\top=0$ by the SREJ.)

Now let $j\to\infty$. Because $P$ is bounded and $X$ is c\`adl\`ag with $\mathbb{E}|X(T)|^2<\infty$, dominated convergence gives
\[
\lim_{j\to\infty}\mathbb{E}^{\mathcal{F}_t}\langle P(\gamma_j)X(\gamma_j),X(\gamma_j)\rangle = \mathbb{E}^{\mathcal{F}_t}\langle P(T)X(T),X(T)\rangle.
\]
Moreover, by Fatou's lemma,
\[
\liminf_{j\to\infty}\mathbb{E}^{\mathcal{F}_t}\int_t^{\gamma_j} \big( \langle QX,X\rangle + \langle Ru,u\rangle \big)ds \ge \mathbb{E}^{\mathcal{F}_t}\int_t^{T} \big( \langle QX,X\rangle + \langle Ru,u\rangle \big)ds.
\]
Therefore,
\[
J(t,\xi;u) = \mathbb{E}^{\mathcal{F}_t}\langle P(T)X(T),X(T)\rangle + \mathbb{E}^{\mathcal{F}_t}\int_t^{T} \big( \langle QX,X\rangle + \langle Ru,u\rangle \big)ds \ge \langle P(t)\xi,\xi\rangle.
\]

For the specific control $u^*$, using the same truncation argument (or noting that $X^*$ is square-integrable and the stochastic integrals are true martingales after localization), we have
\[
J(t,\xi;u^*) = \langle P(t)\xi,\xi\rangle.
\]
Thus $u^*$ attains the lower bound and is optimal. If another optimal control $u$ satisfies $J(t,\xi;u)=\langle P(t)\xi,\xi\rangle$, then the inequality above becomes an equality, forcing
\[
\mathbb{E}^{\mathcal{F}_t}\int_t^{T} \langle \mathscr{N}(s)(u(s)+\mathcal{Q}(s)X(s-)),\, u(s)+\mathcal{Q}(s)X(s-)\rangle ds = 0.
\]
Since $\mathscr{N}\ge\delta I_m>0$, this implies $u(s)+\mathcal{Q}(s)X(s-)=0$ a.e., i.e., $u$ satisfies the closed-loop equation. By uniqueness of the solution of the closed-loop SDEP, we obtain $u=u^*$ a.e. Hence the optimal control is unique.

\underline{Step 3. Uniqueness of the SREJ solution (already contained in Theorem~\ref{thm:existence_uniqueness_SREJ}).}
For completeness, we note that the uniqueness of $(P,\Lambda,\Xi)$ can be proved as follows: Let $(P_1,\Lambda_1,\Xi_1)$ and $(P_2,\Lambda_2,\Xi_2)$ be two adapted solutions. By the verification result of this theorem (applied to each solution), we have $\langle P_i(t)\xi,\xi\rangle = V(t,\xi)$ for all $t,\xi$. Hence $P_1=P_2$. Subtracting the two SREJs, the finite variation parts cancel, and the martingale parts satisfy $\int_0^t (\Lambda_1-\Lambda_2)dW + \int_0^t\int_{\mathbb{Z}}(\Xi_1-\Xi_2)\tilde{\mu}=0$. By the uniqueness of the Doob-Meyer decomposition, we conclude $\Lambda_1=\Lambda_2$ and $\Xi_1=\Xi_2$. The details are standard (see \cite[Theorem~5.2]{ZhangDongMeng2020}).
\end{proof}

\begin{Remark}
Although $\mathcal{Q}$ is only square-integrable (not necessarily bounded), the well-posedness of the closed-loop SDEP is guaranteed by Lemma~\ref{lem:jump_sde_general}, whose pathwise integrability conditions are verified in Step~1 using the fact that $\int_t^T|\mathcal{Q}(s)|^2ds<\infty$ a.s. This replaces the stronger essential boundedness required by Lemma~\ref{lem:2.1} and is essential for handling the indefinite setting where $\mathcal{Q}$ inherits only the square-integrability of $\mathscr{M}$.
\end{Remark}

\begin{Remark}
The above proof rigorously employs a stopping time localization (as in \cite[Theorem~5.1]{ZhangDongMeng2020}) to justify taking expectations. The truncation ensures that all stochastic integrals are true martingales up to $\gamma_j$, and Fatou's lemma allows passage to the limit $j\to\infty$ due to the nonnegativity of the integrands. This overcomes the issue that the state process for an arbitrary control may not have bounded moments of all orders.
\end{Remark}

\section{Application to an Indefinite LQ Problem in Finance}
\label{sec:application}

This section illustrates the applicability of the preceding theoretical results by means of a concrete financial problem. The problem is indefinite because the terminal cost is negative definite, which violates the classical positive definiteness assumptions. It is shown that the uniform convexity condition (UC) reduces to an explicit inequality linking the model parameters, thereby providing a verifiable criterion for the existence of the optimal trading strategy.

\subsection{Problem Formulation}
Consider a financial market consisting of a risk-free asset with instantaneous interest rate $r(t)\ge0$ and a risky asset whose price process $S_t$ follows a jump-diffusion with zero excess return:
\[
\frac{dS_t}{S_{t-}} = r(t) dt + \sigma(t) dW_t + \int_{\mathbb{Z}} \gamma(t,e)\,\tilde{\mu}(de,dt).
\]
The coefficients $r(t)$, $\sigma(t)$ and $\gamma(t,e)$ are bounded, $\mathbb{F}$-predictable, and satisfy $\gamma(t,e)>-1$ a.s.\ to maintain positivity of the stock price. The compensated Poisson random measure $\tilde{\mu}$ has compensator $\nu(de)dt$ with $\nu(\mathbb{Z})<\infty$. The absence of a risk premium (the drift of $S_t$ equals the risk-free rate $r(t)$) means the risky asset is traded purely for speculation or hedging purposes.

An investor starts with initial wealth $x>0$ and chooses a portfolio process $\pi(t)$ (the amount invested in the risky asset). The wealth process $X^\pi(t)$ then evolves according to
\[
dX^\pi(t) = r(t) X^\pi(t) dt + \sigma(t)\pi(t) dW_t + \int_{\mathbb{Z}} \gamma(t,e)\pi(t)\,\tilde{\mu}(de,dt), \qquad X^\pi(0)=x.
\]
The investor wishes to keep the terminal wealth close to zero while penalizing large trading positions. The cost functional is defined as
\[
J(\pi) = \mathbb{E}\Big[ -\frac{\lambda}{2} \big(X^\pi(T)\big)^2 + \frac{\alpha}{2} \int_0^T \pi(t)^2 dt \Big],
\]
where $\lambda>0$ and $\alpha>0$ are given constants. The negative sign in front of the terminal term makes the problem indefinite, because the terminal weight $G = -\frac{\lambda}{2} < 0$ violates the usual positive definiteness condition. All other weighting coefficients are zero:
\[
Q(t)\equiv 0,\quad S(t)\equiv 0,\quad R(t)=\frac{\alpha}{2}.
\]

Thus the problem fits exactly into the SLQ formulation (eq:1.1) with
\[
A(t)=r(t),\quad B(t)=0,\quad C(t)=0,\quad D(t)=\sigma(t),\quad E(t,e)=0,\quad F(t,e)=\gamma(t,e).
\]
Assumptions \textbf{(AS1)} and \textbf{(AS2)} are clearly satisfied. Moreover, Assumption \ref{assum_nonsingular} holds trivially because $E\equiv0$ implies $I+E=I$, which is invertible with determinant $1$.

\subsection{Verification of Uniform Convexity}
In this specific setting, the uniform convexity condition can be verified directly from the definition. For any admissible control $u$ with $X(0)=0$, the solution of the wealth equation is
\[
X(T)=\int_0^T e^{\int_t^T r(s)ds}\sigma(t)u(t)dW(t)+\int_0^T\int_{\mathbb{Z}} e^{\int_t^T r(s)ds}\gamma(t,e)u(t)\,\tilde\mu(de,dt).
\]
By the independence of the Brownian motion and the compensated Poisson measure, the isometry property gives
\[
\mathbb{E}|X(T)|^2 = \mathbb{E}\int_0^T e^{2\int_t^T r(s)ds}\bigl(\sigma(t)^2+\bar\gamma(t)^2\bigr)|u(t)|^2 dt,
\]
where $\bar\gamma(t)^2:=\int_{\mathbb{Z}}\gamma(t,e)^2\nu(de)$. Therefore the cost functional becomes
\[
J(0,0;u)=\mathbb{E}\int_0^T\Bigl[-\frac{\lambda}{2}e^{2\int_t^T r(s)ds}\bigl(\sigma(t)^2+\bar\gamma(t)^2\bigr)+\frac{\alpha}{2}\Bigr]|u(t)|^2 dt.
\]
Uniform convexity requires the existence of $\delta>0$ such that $J(0,0;u)\ge\delta\mathbb{E}\int_0^T|u(t)|^2dt$ for all $u$. Since $u$ is an arbitrary square-integrable predictable process, this is equivalent to the pointwise inequality
\[
-\frac{\lambda}{2}e^{2\int_t^T r(s)ds}\bigl(\sigma(t)^2+\bar\gamma(t)^2\bigr)+\frac{\alpha}{2}\ge\delta\qquad\text{a.e. }(t,\omega)\in[0,T]\times\Omega.
\]
Choosing $\delta$ arbitrarily small, the condition reduces to
\[
\alpha>\lambda\;\operatorname*{ess\,sup}_{(t,\omega)\in[0,T]\times\Omega}\left(e^{2\int_t^T r(s,\omega)ds}\bigl(\sigma(t,\omega)^2+\bar\gamma(t,\omega)^2\bigr)\right).
\]
In the time-homogeneous case where $r,\sigma,\gamma(e)$ are constants and $\bar\gamma^2=\int_{\mathbb{Z}}\gamma(e)^2\nu(de)$, this simplifies to the explicit inequality
\[
\alpha>\lambda\,e^{2rT}\bigl(\sigma^2+\bar\gamma^2\bigr).
\]
Thus the uniform convexity condition translates into a transparent parametric condition, which is exactly the one stated in the introduction.

\subsection{The Stochastic Riccati Equation and Optimal Feedback}
When the above inequality is satisfied, all assumptions of the main verification theorem (Theorem \ref{thm:verification}) are fulfilled. Consequently, the optimal portfolio exists, is unique, and admits the feedback representation
\[
\pi^*(t) = -\mathcal{Q}(t) X^*(t),\qquad \mathcal{Q}(t) = \mathscr{N}(t)^{-1}\mathscr{M}(t)^\top,
\]
where $(P,\Lambda,\Xi)$ is the unique solution of the SREJ \eqref{eq:SREJ}. In the present scalar setting ($n=m=1$), the SREJ takes an explicit form that reveals the role of the Riccati equation.

For scalar $P(t)$ with semimartingale decomposition $dP(t)=\Psi_P(t)dt+\Lambda(t)dW(t)+\int_{\mathbb{Z}}\Xi(t,e)\tilde{\mu}(de,dt)$, the coefficients $\mathscr{N},\mathscr{M},\mathscr{H}$ simplify to
\[
\begin{aligned}
\mathscr{N}(t) &= \frac{\alpha}{2} + \sigma(t)^2 P(t-) + \int_{\mathbb{Z}} \gamma(t,e)^2 \bigl(P(t-)+\Xi(t,e)\bigr)\nu(de),\\[4pt]
\mathscr{M}(t) &= \sigma(t)\Lambda(t) + \int_{\mathbb{Z}} \gamma(t,e)\,\Xi(t,e)\,\nu(de),\\[4pt]
\mathscr{H}(t) &= 2r(t)P(t-).
\end{aligned}
\]
The SREJ \eqref{eq:SREJ} therefore becomes
\[
dP(t) = -\Bigl[2r(t)P(t-) - \frac{\mathscr{M}(t)^2}{\mathscr{N}(t)}\Bigr] dt + \Lambda(t)dW(t) + \int_{\mathbb{Z}}\Xi(t,e)\tilde{\mu}(de,dt),\qquad P(T)=-\frac{\lambda}{2}.
\]
This is a one-dimensional backward stochastic Riccati equation with jumps. The feedback gain is then
\[
\mathcal{Q}(t) = \frac{\mathscr{M}(t)}{\mathscr{N}(t)}.
\]

In the special case where the coefficients $r,\sigma,\gamma$ are deterministic and the value kernel $P$ has no jump component ($\Xi\equiv0$), the martingale part $\Lambda$ also vanishes (since the filtration is trivial), and the SREJ reduces to the deterministic backward Riccati ODE
\[
\dot P(t) = -2r(t)P(t),\qquad P(T)=-\frac{\lambda}{2}.
\]
Solving this terminal-value problem yields
\[
P(t)=-\frac{\lambda}{2}\,e^{2\int_t^T r(s)ds},
\]
which can be verified by direct differentiation: $P(T)=-\lambda/2$ and $\dot P(t)=-\lambda e^{2\int_t^T r(s)ds}(-r(t))=-2r(t)P(t)$. Then $\mathscr{M}(t)=0$, so $\mathcal{Q}(t)=0$ and the optimal portfolio is $\pi^*(t)=0$. This trivial result is expected because with zero excess return and deterministic coefficients, there is neither a speculative motive (the risky asset offers no risk premium) nor a hedging motive (future investment opportunities are perfectly foreseeable); consequently, the investor optimally refrains from trading.

\subsection{Economic Interpretation: Hedging Demand under Zero Excess Return}

The vanishing of the optimal portfolio in the deterministic case does \emph{not} mean the example is economically vacuous. On the contrary, the zero-excess-return setting isolates a phenomenon of fundamental importance in financial economics: \textbf{pure hedging demand}. In the general stochastic case where the coefficients $r(t)$, $\sigma(t)$, and $\gamma(t,e)$ are random, the martingale terms $\Lambda(t)$ and $\Xi(t,e)$ in the SREJ solution are typically nonzero, which makes the feedback gain
\[
\mathcal{Q}(t)=\frac{\mathscr{M}(t)}{\mathscr{N}(t)}=\frac{\sigma(t)\Lambda(t)+\int_{\mathbb{Z}}\gamma(t,e)\,\Xi(t,e)\,\nu(de)}
{\frac{\alpha}{2}+\sigma(t)^2P(t-)+\int_{\mathbb{Z}}\gamma(t,e)^2(P(t-)+\Xi(t,e))\nu(de)}
\]
generically nonzero, and therefore $\pi^*(t)=-\mathcal{Q}(t)X^*(t)\neq0$. The investor trades \emph{despite} the absence of a risk premium because the stochastic fluctuations in volatility $\sigma(t)$, jump intensity $\gamma(t,e)$, and interest rate $r(t)$ create time-varying investment opportunities. The resulting dynamic trading strategy is a pure \emph{intertemporal hedging portfolio} in the sense of Merton~\cite{Merton1971}: the investor adjusts the risky position to hedge against adverse shifts in the future stochastic environment, even though the asset carries no expected excess return.

Several structural features of the problem deserve emphasis.

\begin{enumerate}[leftmargin=*,nosep]
    \item \textbf{Role of the uniform convexity condition.} The inequality $\alpha>\lambda\,e^{2rT}(\sigma^2+\bar\gamma^2)$ has a clear economic meaning. The left-hand side $\alpha$ measures the investor's aversion to large trading positions (the running cost on $\pi$), while the right-hand side combines the terminal penalty weight $\lambda$, the discount factor $e^{2rT}$, and the total variability $\sigma^2+\bar\gamma^2$ of the risky asset. When the variability is large relative to the trading penalty, the problem is uniformly convex and an optimal strategy exists; otherwise, the negative terminal term dominates and the cost functional becomes unbounded below, indicating that the investor would take arbitrarily large positions in an attempt to drive terminal wealth away from zero.
    \item \textbf{Jump-induced hedging.} Even when the diffusion part is deterministic ($\Lambda\equiv0$), the jump component $\Xi(t,e)$ alone can generate nonzero hedging demand. This reflects the investor's response to jump risk: by dynamically adjusting the portfolio before and after jumps, the investor mitigates the impact of discontinuous price movements on the terminal wealth objective. The presence of the $\Xi$-dependent term in $\mathscr{N}(t)$ also shows that jumps affect not only the optimal position but also the convexity of the problem.
    \item \textbf{Indefinite nature.} The negative terminal weight $G=-\lambda/2$ means the investor is penalized for \emph{deviating} from zero terminal wealth. In the deterministic case, this penalty can only be managed by doing nothing (since trading merely adds variability without any compensating expected gain). In the stochastic case, however, the hedging demand created by random coefficients provides a legitimate economic rationale for trading: the investor trades to \emph{reduce} the variance of terminal wealth caused by stochastic fluctuations, thereby lowering the expected squared deviation $\mathbb{E}[(X^\pi(T))^2]$.
\end{enumerate}

In summary, while the deterministic reduction yields a trivial (and economically intuitive) optimal policy, the full stochastic jump-diffusion setting produces a rich and nontrivial feedback strategy driven entirely by intertemporal hedging considerations. The SREJ provides an explicit computational pathway from model parameters to the optimal portfolio in this indefinite LQ framework.

\section{Conclusions and Future Directions}
\label{sec:conclusion}

In this paper, we have developed a complete theory for indefinite stochastic linear-quadratic optimal control problems for jump-diffusion systems with random coefficients. Under a uniform convexity condition, we established the existence and uniqueness of open-loop optimal controls for any initial pair, proved that the associated matrix \(\mathscr{N}(t)\) satisfies a uniform positive lower bound \(\mathscr{N}(t) \ge \delta I_m\), and derived the exact closed-loop feedback representation of the optimal control in terms of the solution to a generalized stochastic Riccati equation with jumps (SREJ). A key innovation is the construction of an algebraic inverse flow from the zero-control base system, which circumvents the technical difficulty arising from the c\`adl\`ag nature of the state process. The extracted semimartingale structure of the value function's kernel matrix, together with its higher-order moment properties, provides a rigorous foundation for the solvability of the SREJ. All results hold for general coefficients, including the case where the control enters the jump part (\(F \neq 0\)), without any relaxation techniques or extra invertibility assumptions on the optimal state process. As an application, we considered a financial portfolio problem with zero excess return and a negative terminal weight, where the uniform convexity condition reduces to an explicit inequality among the model parameters, illustrating the practical relevance of our theoretical framework.

Several directions remain for future research. These include:
\begin{itemize}
    \item Extending the theory to Markovian regime-switching systems, where the coefficients depend on a finite-state Markov chain, and investigating the resulting coupled system of SREJs.
    \item Incorporating the indefinite LQ formulation into nonzero-sum and zero-sum stochastic differential games with jump diffusions and random coefficients, aiming at open-loop and closed-loop Nash equilibria.
    \item Developing numerical methods for the SREJ, such as discretization schemes based on backward stochastic differential equations with jumps, and exploring practical implementations in finance and engineering, including optimal portfolio selection under crash risk and risk management with indefinite preferences.
\end{itemize}

\vspace{6pt}
\noindent\textbf{Acknowledgments.} The authors declare no conflict of interest. No data were generated or analysed in this study.

\end{document}